\documentclass[11pt]{amsart}

\usepackage{amsfonts} 
\usepackage{amsmath} 
\usepackage{amssymb} 
\usepackage{amsthm} 
\usepackage{mathtools}
\usepackage{amscd}
\usepackage{booktabs}
\usepackage{color}
\usepackage[dvipsnames]{xcolor}
\usepackage{import}
\usepackage{array}
\usepackage{blkarray}
\usepackage{mathrsfs}

\usepackage[margin=3.5cm]{geometry} 

\usepackage[hidelinks,pagebackref,pdftex]{hyperref} 
\usepackage{makecell} 
\usepackage{multicol}

\usepackage{kbordermatrix} 

%% For editing
% \usepackage[notref,notcite]{showkeys}

\usepackage{marginnote}
\long\def\@savemarbox#1#2{\global\setbox#1\vtop{\hsize\marginparwidth 
  \@parboxrestore\tiny\raggedright #2}}
\marginparwidth .75in \marginparsep 7pt 

%% Uncomment these to hide margin notes
%% \newcommand\marginjess[1]{}
%\newcommand\margindan[1]{}

%% For hyperref

\renewcommand*{\backref}[1]{}
\renewcommand*{\backrefalt}[4]{
  \ifcase #1
  [No citations.]
  \or [#2]
  \else [#2]
  \fi }

\usepackage{graphicx} 

\AtBeginDocument{%
   \def\MR#1{}
}

%% This makes equations follow numbering of everything else
\numberwithin{equation}{section}
\theoremstyle{plain}
\newtheorem{theorem}[equation]{Theorem}

\newtheorem{corollary}[equation]{Corollary}
\newtheorem{lemma}[equation]{Lemma}

\newtheorem{proposition}[equation]{Proposition}

\newtheorem*{namedtheorem}{\theoremname}
\newcommand{\theoremname}{testing}

\theoremstyle{definition}

\newcommand{\C}{\mathbb{C}}
\newcommand{\CC}{\mathbb{C}}

\newcommand{\mfc}{\mathfrak{c}}
\newcommand{\mfl}{\mathfrak{l}}
\newcommand{\mfm}{\mathfrak{m}}

\newcommand{\QQ}{\mathbb{Q}}

\newcommand{\TT}{\mathcal{T}}

\newcommand{\Z}{\mathbb{Z}}

\newcommand{\In}{\mathscr{I}}
\newcommand{\NZ}{\mathrm{NZ}}

\newcommand{\PSL}{{\mathrm{PSL}}}
\newcommand{\SL}{{\mathrm{SL}}}

\newcommand{\refsec}[1]{Section~\ref{Sec:#1}}

\newcommand{\reffig}[1]{Figure~\ref{Fig:#1}}

\newcommand{\reflem}[1]{Lemma~\ref{Lem:#1}}
\newcommand{\refprop}[1]{Proposition~\ref{Prop:#1}}
\newcommand{\refthm}[1]{Theorem~\ref{Thm:#1}}
\newcommand{\refcor}[1]{Corollary~\ref{Cor:#1}}

\begin{document}

\title{A-polynomials of fillings of the Whitehead sister} 

\author[J. Howie]{Joshua A. Howie}
\address{School of Mathematics,
Monash University,
VIC 3800, Australia}
\email{josh.howie@monash.edu}

\author[D. Mathews]{Daniel V. Mathews} 
\address{School of Mathematics,
Monash University,
VIC 3800, Australia}
\email{Daniel.Mathews@monash.edu}

\author[J. Purcell]{Jessica S. Purcell}
\address{School of Mathematics,
Monash University,
VIC 3800, Australia}
\email{jessica.purcell@monash.edu}

\author[E. Thompson]{Em K.~Thompson}
\address{School of Mathematics,
Monash University,
VIC 3800, Australia}
\email{em.thompson@monash.edu}

%\date{}

\begin{abstract}
Knots obtained by Dehn filling the Whitehead sister include some of the smallest volume twisted torus knots. Here, using results on A-polynomials of Dehn fillings, we give formulas to compute the A-polynomials of these knots. Our methods also apply to more general Dehn fillings of the Whitehead sister.
\end{abstract}

\maketitle

\section{Introduction}

A-polynomials were introduced in \cite{CCGLS}. They encode information on the deformation space of hyperbolic structures of knots, on incompressible surfaces embedded in the knot complements, on volumes and cusp shapes. They also play into conjectures in quantum topology, such as the AJ-conjecture~\cite{FrohmanGelcaLofaro, garoufalidis:AJ-conj, garoufalidisLe, GK74}.

In general, it is a difficult problem to compute explicit formulas for A-polynomials of families of knots. However, explicit or recursive formulas are known for some simple families of knots. Recursive formulas for A-polynomials were first given for twist knots, by Hoste and Shanahan~\cite{HosteShanahan04}, and in closed form by Mathews~\cite{Mathews:A-poly_twist_knots_err, Mathews:A-poly_twist_knots}. Formulas for $(-2,3,2n+1)$-pretzel knots were found by Tamura and Yokota~\cite{TamuraYokota}, and by Garoufalidis and Mattman~\cite{GaroufalidisMattman}.
Petersen found A-polynomials of certain double twist knots $J(k,\ell)$~\cite{Petersen:DoubleTwist}, recovering and extending Hoste and Shanahan's work. Closed form formulas for knots with Conway's notation $C(2n,3)$ were given by Ham and Lee~\cite{HamLee}, and Tran found formulas for A-polynomial 2-tuples of a family of 2-bridge links he calls twisted Whitehead links~\cite{Tran}. A-polynomials of cabled knots and iterated torus knots were given by Ni and Zhang~\cite{NiZhang}.

In \cite{HowieMathewsPurcell}, it was shown that the A-polynomial could be defined by quadratic polynomials obtained from a triangulation of the knot complement, with particularly simple form for families of knots obtained by Dehn filling a parent link. Of the known examples above, all the hyperbolic families are obtained by simple Dehn fillings of a parent link. In each of these cases, the $n$th knot in the family differs from the $(n-1)$th by adding exactly two crossings to a twist region; in particular the method of \cite{HowieMathewsPurcell} applies. However, the methods of \cite{HowieMathewsPurcell} also apply more broadly. In this paper, we apply them to a family of twisted torus links. This family is unlike those above in that changing the Dehn filling slope adjusts the diagram by adding twenty crossings rather than just two in a twist region. Thus techniques to compute A-polynomials using diagrams, or group presentations coming from diagrams, would be difficult to apply to this family of knots. 

%%%%%%%%%%%%%%%%%%%%%%%%%%%%%%%%%%%%%%%%%%%%%%%%%%%%%%%%%%%%%%%%%

The family that we consider are knots obtained by Dehn filling the Whitehead sister. 
The Whitehead sister is known to be the complement of the $(-2,3,8)$-pretzel link, shown on the left of \reffig{WhSisLink}. An equivalent link is shown on the right. It has two components; one component is an unknot in $S^3$. Hence when we perform $1/n$-Dehn filling of the unknotted link component, we obtain a knot complement in $S^3$. In fact, as indicated by the form of the link on the right of \reffig{WhSisLink}, the $1/n$-Dehn filling is the twisted torus knot $T(5,1-5n,2,2)$, with notation as in~\cite{ckm}. When $n=1$, this is the $(-2,3,7)$-pretzel knot.

\begin{figure}
  \includegraphics{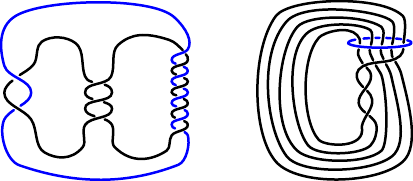}
  \caption{Two views of the $(-2,3,8)$-pretzel link. One component (shown in blue) is an unknot embedded in $S^3$.}
  \label{Fig:WhSisLink}
\end{figure}

The $(-2,3,8)$-pretzel link complement is constructed by face pairings of a single regular ideal octahedron. It is known to be one of the two hyperbolic 3-manifolds of minimal volume with exactly two cusps, by work of Agol~\cite{Agol:MinVolume}. The other minimal volume 2-cusped hyperbolic 3-manifold is the Whitehead link complement, which is also constructed by face-pairings of a regular ideal octahedron. For this reason, the complement of the $(-2,3,8)$-pretzel link is known as the Whitehead sister.
In SnapPy~\cite{SnapPy}, additional names for this 3-manifold are {\texttt{m125}}, {\texttt{ooct01\_0000}}, and the link complement with the same framing is {\texttt{L13n5885}}. Alternatively, it may be obtained by $-3/2$-Dehn filling on any one component of the 3-component link known as the \emph{magic manifold}. Its exceptional Dehn fillings have been completely classified by Martelli and Petronio~\cite{MartelliPetronio:Magic}; there are exactly six of them.

In this paper, we obtain triangulations of all but three, and A-polynomials of all but seven Dehn fillings of the Whitehead sister. Triangulations use the layered solid tori of Jaco and Rubinstein~\cite{JacoRubinstein:LST}; A-polynomial calculations apply the methods in \cite{HowieMathewsPurcell}. 
Many of these manifolds are recognised to be manifolds in the census of cusped 3-manifolds obtained by small numbers of tetrahedra~\cite{Census}, including knot complements~\cite{CallahanDeanWeeks, ckm, ckp}.

Our main result concerns the A-polynomials of the $1/n$-Dehn fillings. 
Throughout, when we state that we are performing a Dehn filling of the Whitehead sister, we mean that we are performing the filling along the unknotted component of the $(-2,3,8)$-pretzel link. We use the terminology \emph{Whitehead sister} to refer to the 3-manifold that is the complement of this link.

\begin{theorem}\label{Thm:Main}
For $n\geq 3$, suppose $K(n)$ is the knot obtained by $1/n$-Dehn filling of the Whitehead sister. 
Then (a factor of) the $\PSL(2,\CC)$ A-polynomial of $K(n)$ is obtained from the following set of equations after eliminating all variables except $\ell$ and $m$.

Outside equations: \hspace{.2in}
$\displaystyle{
  \gamma_{4/1} = \frac{-\ell+m}{\sqrt{\ell}(-1+m)}, \quad
  \gamma_{1/0} = \frac{-\ell+m^2}{\sqrt{m}(-\ell+m)} %\\
}$

First inside equations:
\begin{gather*}
  \gamma_{2/1} = (\gamma_{1/0}^2-1)/\gamma_{4/1}, \quad
  \gamma_{1/1} = \gamma_{2/1}^2 - \gamma_{1/0}^2, \quad
  \gamma_{0/1} = (\gamma_{1/1}^2-\gamma_{1/0}^2)/\gamma_{2/1} \\
  \gamma_{1/2}=(\gamma_{0/1}^2 - \gamma_{1/1}^2)/\gamma_{1/0}, 
\end{gather*}

Recursive inside equations (empty if $n=3$):
\[
  \gamma_{1/(k-1)}\gamma_{1/(k-3)} + \gamma_{0/1}^2 - \gamma_{1/(k-2)}^2=0, \mbox{ for } 4\leq k \leq n.
\]

And the folding equation: $\gamma_{0/1}=\gamma_{1/(n-1)}$.
\end{theorem}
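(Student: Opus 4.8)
The plan is to set up the gluing-variety / developing-map framework from~\cite{HowieMathewsPurcell} on a specific triangulation of the $1/n$-Dehn filling $M$, and then read off the stated system of equations as the defining equations of that variety. First I would fix the triangulation of the Whitehead sister link complement by the single regular ideal octahedron, together with its two cusps: one cusp meridian gets the $1/n$-filling. The filling is performed by attaching a layered solid torus (LST) of Jaco--Rubinstein~\cite{JacoRubinstein:LST} along the unknotted cusp, and the combinatorics of that LST is exactly what produces a \emph{chain} of slopes $1/0, 2/1, 1/1, 0/1, 1/2, 1/3, \dots, 1/(n-1)$ indexing the edges introduced at each layer. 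The quantities $\gamma_{p/q}$ are the edge- or gluing-parameters (of $\PSL(2,\CC)$ type, hence the square roots $\sqrt{\ell},\sqrt{m}$) attached to these slopes, so the first task is to match the abstract parameters of~\cite{HowieMathewsPurcell} to these concrete slopes.

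Next I would derive the three groups of equations separately. The \textbf{outside equations} come from the octahedron together with the holonomy of the \emph{unfilled} knot cusp: $\ell$ and $m$ are the longitude and meridian eigenvalue-squares (or their $\PSL$ analogues), and the two expressions for $\gamma_{4/1},\gamma_{1/0}$ are precisely the edge relations at the boundary of the octahedral part, written in terms of $\ell,m$. The \textbf{inside equations} (both the four ``first'' ones and the recursive family) are the gluing/edge consistency equations within the layered solid torus: each new layer of the LST glues two tetrahedra along an edge, and the standard quadratic relation among the three consecutive slope-parameters at that edge is exactly $\gamma_{1/(k-1)}\gamma_{1/(k-3)}+\gamma_{0/1}^2-\gamma_{1/(k-2)}^2=0$. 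The first inside equations are the ``base cases'' of this recursion near the core, where the three initial tetrahedra of the LST are glued; these must be computed by hand from the initial layering and I expect them to take the stated non-uniform shapes ($(\gamma_{1/0}^2-1)/\gamma_{4/1}$, etc.) because the innermost piece of a layered solid torus is a one-tetrahedron Möbius/solid-torus piece rather than a generic layer. Finally the \textbf{folding equation} $\gamma_{0/1}=\gamma_{1/(n-1)}$ records the closing-off of the solid torus: the two faces of the last layer are folded together, forcing the outermost slope-parameter to equal the parameter of the core slope $0/1$.

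The technical heart is verifying that the output of the~\cite{HowieMathewsPurcell} machinery, applied to this particular triangulation, collapses to a single variable ladder indexed by the slopes $1/k$, so that eliminating all the $\gamma_{p/q}$ between the outside equations, the inside recursion, and the folding equation yields a polynomial in $\ell,m$ that is (a factor of) the $\PSL(2,\CC)$ A-polynomial. In principle~\cite{HowieMathewsPurcell} guarantees that the resulting variety, with the meridian of the filled cusp set to the filling slope and the gluing equations imposed, cuts out the character variety restricted to the boundary of the unfilled cusp; so the A-polynomial factor follows once the equations are shown to be the correct gluing equations for this triangulation. The main obstacle I anticipate is bookkeeping the orientations and edge-identifications in the layered solid torus carefully enough to pin down the exact base-case equations and the sign/inverse conventions in the quadratic recursion --- getting these consistent with the $\sqrt{\ell},\sqrt{m}$ normalization of the two outside equations is where the real work lies, whereas the recursive step itself is a uniform and essentially formal consequence of the layer-by-layer gluing.
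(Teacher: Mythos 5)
Your overall strategy --- triangulate the filling by attaching a layered solid torus and read the equations off the Ptolemy/Neumann--Zagier machinery of \cite{HowieMathewsPurcell} --- is exactly the paper's route (via \reflem{Gamma4/1Gamma1/0}, \reflem{First3LSTTet} and \refthm{HMPFilling2}). But two points in your plan are wrong or missing in ways that would matter once you carried it out. First, you attribute the non-uniform shape of the ``first inside equations'' to an innermost one-tetrahedron piece of the layered solid torus ``near the core.'' That is backwards: every layered tetrahedron, including the first three, contributes the \emph{same} uniform relation $\gamma_{o_k}\gamma_{h_k}+\gamma_{p_k}^2-\gamma_{f_k}^2=0$; the first few only look different because the initial Farey triangle is $(3/1,4/1,1/0)$, the pivot slope changes during the first steps before the path settles into the fan around $0/1$, and $\gamma_{3/1}$ has been normalised to $1$ --- a normalisation your write-up never mentions, but without which you cannot obtain $(\gamma_{1/0}^2-1)/\gamma_{4/1}$ or the denominator-free $\gamma_{1/1}=\gamma_{2/1}^2-\gamma_{1/0}^2$. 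The core of the solid torus is instead where the folding equation lives, as you correctly say elsewhere; also note your slope chain omits $3/1$ and $4/1$ and the step to $1/2$ pivots at $1/1$ with old slope $1/0$, not at $0/1$, which is why $\gamma_{1/2}$ sits among the ``first'' equations rather than the recursion.

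Second, the outside equations are not simply ``edge relations at the boundary of the octahedral part.'' In the paper they require computing the incidence and Neumann--Zagier matrices of the four-tetrahedron triangulation, finding an integer vector $B$ with $\NZ\cdot B=C$ whose entries vanish on the two tetrahedra meeting the filled cusp, writing down the two Ptolemy equations of tetrahedra $0$ and $1$ (\reflem{Tetr01}), and solving them simultaneously for $\gamma_{4/1}$ and $\gamma_{1/0}$ after setting $\gamma_{3/1}=1$; the fact that these two equations are identical for every filling is \refthm{HMPFilling}, which needs the hypotheses that only two tetrahedra meet the filled cusp and that $\mfm_0,\mfl_0$ avoid them. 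You flag this as ``where the real work lies,'' but as written the proposal gives no route to the specific coefficients, so the outside equations --- and hence the whole system --- remain unverified.
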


\refthm{Main} effectively gives A-polynomials explicitly: eliminating all $\gamma$ variables involves only substitution. 
The outside equations express $\gamma_{1/0}$ and $\gamma_{4/1}$ in terms of $\ell$ and $m$; each inside equation expresses a $\gamma$ variable 
in terms of previous $\gamma$ variables, hence in terms of $\ell$ and $m$; the folding equation equates two expressions in $\ell$ and $m$, which suitably rearranged gives the A-polynomial. 
Note implicit in the statement of the theorem, and in the process of elimination of $\gamma$ variables, is that the variables $\gamma_i$ are nonzero; this holds because they are exponentials of other variables in~\cite{HowieMathewsPurcell}, so we assume throughout that $\gamma$ variables are never zero.

After elimination, we obtain the same factor of the A-polynomial as Champanerkar~\cite{Champanerkar:Thesis}. In particular, the factor corresponding to the complete hyperbolic structure is a factor of the polynomial of \refthm{Main}. However, the A-polynomial may have additional factors that the gluing variety does not pick up; see Segerman~\cite{Segerman:Deformation}.

For convenience, we have only stated the result for $1/n$-fillings for $n \geq 3$ here. A corresponding result for negative $n$ (specifically, $n \leq -2$) is \refthm{RecursiveLSTNegative}. Equations for the remaining $n \neq 0$ are easily found using the methods described in this paper.

Indeed, \refcor{DehnFillWhSis} gives the A-polynomial of a general Dehn filling similarly explicitly, for \emph{any} slope $p/q$ except those in $\{2, 3, 7/2, 11/3, 4, 5, 1/0 \}$. 

The ``missed" Dehn filling slopes arise for two reasons. First, our methods are not guaranteed to apply to non-hyperbolic Dehn fillings, which have slopes $p/q\in\{2,3,7/2, 11/3, 4, 1/0\}$ using our framing (which is different from that of Martelli and Petronio \cite{MartelliPetronio:Magic}, but yields the same exceptions). Second, although the methods of \cite{HowieMathewsPurcell} can deal with all hyperbolic fillings, they involve degenerate layered solid tori for slopes $p/q \in \{ 2/1, 7/2, 5/1\}$; we omit them here.

A triangulation of a hyperbolic 3-manifold $M$ is \emph{geometric} if the hyperbolic structure on $M$ is built by putting a positively oriented hyperbolic structure on each tetrahedron and then gluing. A triangulation is \emph{minimal} if $M$ cannot be triangulated by fewer tetrahedra. 
For $n\in \{\pm 1, \pm 2, \pm 3, \pm 4\}$ the $1/n$-Dehn filling of the Whitehead sister appears in the SnapPy census. Thus we know these triangulations are both geometric and minimal. The Whitehead sister also satisfies conditions required by Gu{\'e}ritaud and Schleimer to ensure that its sufficiently high Dehn fillings are geometric~\cite{GueritaudSchleimer}; this means there exists some $N$ such that for $n\geq N$, the $1/n$-Dehn filling described here is geometric. Unfortunately the bound on $N$ from \cite{GueritaudSchleimer} is not explicit. 
We conjecture that the triangulations of all the $1/n$-Dehn fillings in this paper are both geometric and minimal. 

We note that Thompson has been able to use the formulas of~\cite{HowieMathewsPurcell}, along with results in cluster algebras, to give more explicit closed forms for the A-polynomials of the knots in this paper~\cite{Thompson:Cluster}.

\subsection{Acknowledgements}
This work was partially funded by the Australian Research Council, grant DP210103136. Thompson was supported by an Australian Government Research Training Program (RTP) scholarship. 

%%%%%%%%%%%%%%%%%%%%%%%%%%%%%%%%%%%%%%%%%%%%%%%%%%%%%%%%%%%%%%%%%
\section{Background on A-polynomials}
\label{Sec:APoly}

Suppose a compact 3-manifold has boundary consisting of a single torus, and its interior admits a complete hyperbolic structure. Thurston observed that such a manifold has a 2-(real) dimensional space of incomplete hyperbolic structures~\cite{Thurston:Notes}. In the concrete setting of the figure-8 knot complement, Thurston showed that the complete hyperbolic structure is obtained by triangulating the knot complement by two regular ideal tetrahedra, and that incomplete structures are obtained by deforming the hyperbolic structures on the ideal tetrahedra in a neighbourhood of the complete structure. For a general hyperbolic 3-manifold with a decomposition into hyperbolic ideal tetrahedra, the space of deformations of the hyperbolic structures on ideal tetrahedra is now known as the \emph{deformation variety} or the \emph{gluing variety}, because the tetrahedra are required to satisfy gluing equations. The gluing variety encodes incomplete hyperbolic structures, and also additional information whose geometric interpretation is not clear. The face pairings of the tetrahedra at a point in the gluing variety will give a representation of the fundamental group of the 3-manifold into $\PSL(2,\CC)$. 

Culler and Shalen considered representations of the fundamental group of a 3-manifold into $\SL(2,\CC)$, and put them into an algebro-geometric framework. Such representations form an $\SL(2,\CC)$ character variety~\cite{CullerShalen:Varieties}. In the case of a hyperbolisable 3-manifold with a single cusp (i.e.\ the interior of a compact 3-manifold with a single torus boundary component), the $\SL(2,\CC)$ character variety will be 2-(real) dimensional, which is implied by Thurston's work. In~\cite{CCGLS}, Cooper, Culler, Gillet, Long, and Shalen introduce the A-polynomial. This polynomial gives a description of the 2-dimensional representation variety in terms of the variables $M$ and $L$, which in their setting are eigenvalues of matrices representing meridian and longitude curves in the fundamental group of the torus boundary component of the original compact 3-manifold. An A-polynomial can also be defined when $\PSL(2,\CC)$ representations are used, and this was further investigated by Boyer and Zhang~\cite{BoyerZhang}.  

Returning to triangulations, certain products of parameters that encode meridian and longitude, known as the \emph{cusp equations}, will be trivial in the complete setting. 
Champanerkar observed that by writing the cusp equations in variables $m$ and $\ell$, one could obtain a polynomial describing the gluing variety~\cite{Champanerkar:Thesis}. Champanerkar proved that the polynomial obtained by this method will always divide the $\PSL(2,\CC)$ A-polynomial. However, the gluing and cusp equations required for Champanerkar's method are often very high degree in a number of variables $z_i$ corresponding to the number of ideal tetrahedra, and obtaining this A-polynomial requires simultaneously eliminating variables $z_i$ to reduce to a single polynomial in $m$ and $\ell$. This is not always possible even with computer assistance. 

In~\cite{HowieMathewsPurcell}, Howie, Mathews and Purcell use ideas of Dimofte~\cite{DimofteQRCS} and results of Neumann and Zagier~\cite{NeumannZagier} to change the variables of Champanerkar's equations. This produces Ptolemy-like equations. 
Instead of using variables associated to ideal tetrahedra, the variables are associated to edges of the triangulations, with one equation per tetrahedron. 
The new variables $\gamma_i$ are exponentials of variables $\Gamma_i$ arising from the linear algebra of a symplectic extension of the Neumann--Zagier matrix, so are never zero.
In~\cite{HowieMathewsPurcell}, it was shown that these equations can lead to a simpler system of equations for 3-manifolds obtained by Dehn filling; this was further explored by Thompson~\cite{Thompson:Cluster}. These are the results that we make use of in this paper. 

\subsection{Notes on the variables $m$ and $\ell$}
In this paper, $m$ and $\ell$ have a geometric meaning: they come from identifications of tetrahedra whose ideal vertices form a meridian or longitude of the cusp boundary. This is the same meaning as in work of Champanerkar~\cite{Champanerkar:Thesis}. 
However, note that in \refthm{Main} there are square roots of $m$ and $\ell$ involved in the defining equations, where Champanerkar only includes integer powers of $m$ and $\ell$. 
This is a consequence of the construction of~\cite{HowieMathewsPurcell}. To move from the traditional description of the gluing variety to the Ptolemy-like description, we change variables by inverting a matrix that is an expanded symplectic version of the Neumann--Zagier matrix. Neumann and Zagier showed that equations in tetrahedra parameters defining $m$ and $\ell$, as well as gluing equations, have a symplectic-like structure~\cite{NeumannZagier} given by a symplectic pairing $\omega(\cdot, \cdot)$. Under the pairing, vectors obtained from gluing equations give zero. Vectors obtained from curves on the cusp give \emph{twice} the intersection number. It is this factor of two --- twice the intersection number --- that introduces the square roots into our equations. 
The square roots can be cleared by rationalisation, but the resulting equations are more complicated, and so we leave them as they are. 

The variables $M$ and $L$ in the traditional A-polynomial correspond to eigenvalues of matrices, and do not have the same geometric meaning as $m$ and $\ell$ here. However, they are related by $M^2 = m$ and $L^2 = \ell$; see~\cite[Corollary~1.4]{HowieMathewsPurcell}. 

\subsection{Comparison with other Ptolemy equations}
Garoufalidis, D.~Thurston, and Zickert define a Ptolemy variety~\cite{GTZ}, inspired by work of Fock and Goncharov~\cite{FockGoncharov06}.
This assigns a Ptolemy relation to tetrahedra in a triangulation of a 3-manifold and leads to a representation of the fundamental group into $\SL(2,\CC)$. In this setting, a divisor of the $\SL(2,\CC)$ A-polynomial is also obtained; see Zickert~\cite[Corollary~1.7]{Zickert:PtolemyDehnA-poly}, and Goerner and Zickert~\cite{GoernerZickert}. 
We conjecture that there is a geometric connection between the methods of~\cite{GTZ} and the methods here. However, this is not clear \emph{a priori}. In~\cite{GTZ}, Ptolemy equations are obtained combinatorially from oriented tetrahedra. In this paper, orientation is not required.

%%%%%%%%%%%%%%%%%%%%%%%%%%%%%%%%%%%%%%%%%%%%%%%%%%%%%%%%%%%%%%%%%
\section{Triangulation of the Whitehead sister}
\label{Sec:triangulation_of_Whitehead_sister}

The default SnapPy triangulation of the Whitehead sister {\texttt{m125}} has four tetrahedra, with an ideal edge added to subdivide the ideal octahedron. 
There are three choices for adding such an ideal edge, and in the SnapPy census, it is chosen so that it meets both cusps. We wish instead to choose an edge that does not meet the cusp corresponding to the unknotted component, as this will make it simpler to triangulate Dehn fillings. 
\reffig{WhiteheadSisterTable} gives our triangulation. 
The notation, as in Regina, is as follows. The four tetrahedra are labeled $0, 1, 2, 3$; each has ideal vertices labeled $0, 1, 2, 3$. The top-left entry $2(312)$ says that the face of tetrahedron $0$ with vertices $012$ is glued to the face of tetrahedron $2$ with vertices $312$, with the ideal vertices glued in order. 
Up to relabeling, this is obtained from the default SnapPy triangulation of {\texttt{m125}} by performing a 4-4 move.

\begin{figure}
  \begin{tabular}{c|c|c|c|c}
    Tetrahedron & Face 012 & Face 013 & Face 023 & Face 123 \\
    \hline
    0 & 2(312) & 1(023) & 1(312) & 1(031) \\
    1 & 3(123) & 0(132) & 0(013) & 0(230) \\
    2 & 3(021) & 3(031) & 3(032) & 0(120) \\
    3 & 2(021) & 2(031) & 2(032) & 1(012) \\
  \end{tabular}
  \caption{Four-tetrahedron triangulation of the Whitehead sister.}
  \label{Fig:WhiteheadSisterTable}
\end{figure}

An embedded horospherical torus about a cusp intersects the ideal tetrahedra in triangles, inducing a \emph{cusp triangulation}.
The cusp to be filled meets only tetrahedra 2 and 3, each in the vertex labeled 0, giving a cusp triangulation with two triangles. The cusp triangulation for other cusp is shown in \reffig{WhSisCuspTri}. Tetrahedra $2$ and $3$ form a hexagon within this cusp triangulation. We choose generators $\mathfrak{l}, \mathfrak{m}$ of the homology of this cusp that avoid the hexagon and meet as few cusp triangles as possible.

\begin{figure}
  \import{figures/}{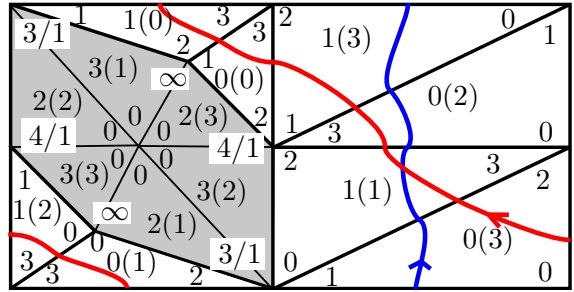}
  \caption{Triangulation of the unfilled cusp of the Whitehead sister.  
Generators of homology are shown: $\mathfrak{l}$ in red and $\mathfrak{m}$ in blue.}
  \label{Fig:WhSisCuspTri}
\end{figure}

The triangulation has exactly four edge classes. One edge, which we will call $e$, runs from one cusp to the other. It has one end in the centre of the shaded hexagon. The other three edges have both of their ideal endpoints on the unfilled cusp. All three lie on the boundary of tetrahedra $2$ and $3$. They are labeled $3/1$, $4/1$, and $\infty$ in the figure, for reasons we will explain below.

\subsection{Meridian and longitude basis}
We also need to identify the actual meridian and preferred longitude for the cusp. We can use SnapPy to determine these, either using the PLink editor to enter the pretzel link $P(-2,3,8)$ into SnapPy or using {\texttt{L13n5885}}; this input ensures treatment as a link complement in $S^3$.
We find that one of the generators we chose, namely $\mathfrak{m}$, was indeed the meridian. The preferred longitude $\mathfrak{l}'$ is shown in Figure~\ref{Fig:preflong}. We have $\mathfrak{l}'=\mathfrak{l}\mathfrak{m}^{-8}$.

\begin{figure}
  \centering
  \includegraphics{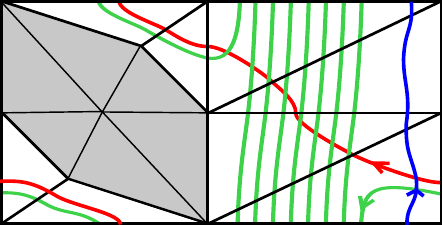}
  \caption{The preferred longitude $\mathfrak{l}' =\mathfrak{l}\mathfrak{m}^{-8}$ is shown in green.  }
    \label{Fig:preflong}
\end{figure}

\section{Dehn filling triangulations}

To perform Dehn filling, pull out tetrahedra $\Delta_2$ and $\Delta_3$. The union of these two tetrahedra is homeomorphic to $T^2\times[0,\infty)$ with a single point removed from its boundary $T^2\times\{0\}$. Its complement is built by gluing tetrahedra $\Delta_0$ and $\Delta_1$, but with two faces unglued, namely face $012$ of tetrahedron $\Delta_0$ and face $012$ of tetrahedron $\Delta_1$. Its boundary is a punctured torus triangulated by these two faces.

The Dehn filling is obtained by attaching a triangulated solid torus to these two faces. That is, we build a solid torus whose boundary is triangulated by the ideal triangles corresponding to the unglued faces of $\Delta_0$ and $\Delta_1$. The meridian of the solid torus gives the slope of the Dehn filling. 

To describe the slope of the Dehn filling, let $\mu$, $\lambda$ denote the standard meridian, longitude pair for an unknotted component. Dehn filling along any slope of the form $\mu + n\lambda$ for $n\in \Z$ will result in the complement of a knot in $S^3$. More generally, write any slope $p\mu + q\lambda$ for $p,q\in \Z$ by $p/q\in\QQ\cup\{1/0\}$. 

\subsection{Layered solid tori}
We use the \emph{layered solid torus} construction of ~\cite{GueritaudSchleimer, JacoRubinstein:LST} to fill a specified slope $r=p/q$. The boundary of a layered solid torus is a 1-punctured torus, triangulated by two fixed ideal triangles. Edges of the boundary triangles form slopes on the 1-punctured torus, each of which can be written as some $a/b\in\QQ\cup\{1/0\}$. A triangulation of a 1-punctured torus consists of a triple of slopes for which the geometric intersection number of any pair is $1$. These are encoded by the \emph{Farey triangulation}.

\begin{figure}
  \includegraphics{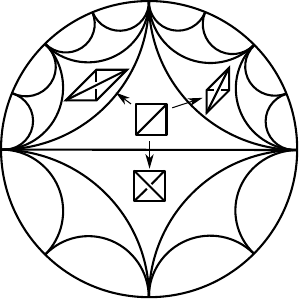}
  \caption{Tetrahedra obtained from moving across triangles in the Farey triangulation.}
  \label{Fig:FareyLST}
\end{figure}

In our case, initial triangles corresponding to faces of $\Delta_0$ and $\Delta_1$ give a starting triangle in the Farey triangulation. There is a geodesic from this triangle to the rational number $r=p/q$, which alternatively can be considered as a length-minimising path through the dual 1-skeleton of the Farey triangulation. The sequence of triangles meeting the geodesic gives a sequence of triangulations of a 1-punctured torus, each obtained from the previous by a diagonal exchange. The diagonal exchange can be realised by layering a tetrahedron onto the punctured torus; see \reffig{FareyLST}, taken from \cite{HowieMathewsPurcell}.

Layering tetrahedra in this manner builds a space homotopy equivalent to a thickened punctured torus.
At each step, the space has two boundary components. One is marked with the initial triangulation. The other is marked with the triangulation in the Farey graph corresponding to the most recently added tetrahedron.

To obtain a solid torus with the appropriate meridian, we stop layering tetrahedra after reaching the triangle previous to the one containing $r$.
Being separated from $r$ by a single edge of a Farey triangle, a diagonal exchange at this point would give a triangulation with slope $r$;
but instead of a diagonal exchange, we fold the two triangles across the corresponding diagonal. This gives a manifold homotopy equivalent to a solid torus, and makes the slope $r$ homotopically trivial. See \reffig{FoldTriangles}, taken from \cite{HowieMathewsPurcell}.

\begin{figure}
  \import{figures/}{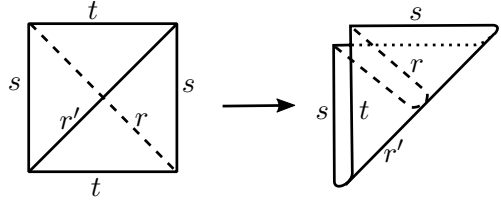}
  \caption{Folding makes the diagonal slope $r$ homotopically trivial.}
  \label{Fig:FoldTriangles}
\end{figure}

Applying this procedure to the Whitehead sister, we first find the initial slopes. 
After removing tetrahedra $\Delta_2$ and $\Delta_3$, the resulting punctured torus boundary is triangulated by three slopes, which (using SnapPy~\cite{SnapPy} and Regina~\cite{Regina}) we find to be $4/1$, $3/1$, and $1/0$. Different manifolds obtained by Dehn filling are shown in the Farey graph in \reffig{FareyWhSis}. Observe that aside from the first step, each step in the Farey triangulation can be labeled with an L, for turning left, or R, for turning right. 

\begin{figure}
  \import{figures/}{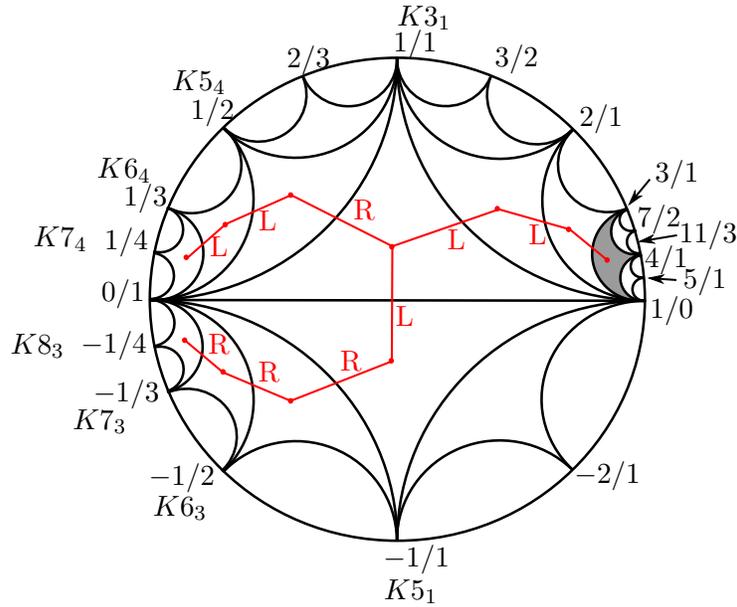}
  \caption{Paths in the Farey triangulation that produce knots obtained by Dehn filling the Whitehead sister. 
	}
  \label{Fig:FareyWhSis}
\end{figure}

We may now compute triangulations of Dehn fillings. To obtain two infinite families of knots, perform $1/n$-Dehn fillings for positive and negative integers $n$. When $n$ is positive, these include hyperbolic knot complements $K3_1$, $K5_4$, $K6_4$, $K7_4$, and $K8_4$.
When $n$ is negative, these include hyperbolic knot complements $K5_1$, $K6_3$, $K7_3$, and $K8_3$. These fillings correspond to paths in the Farey triangulation that start at $3/1, 4/1, 1/0$, move towards $2/1$, then step L, L; in the positive case this is followed by an R and a sequence of L's; in the negative case this is followed by an L and a sequence of R's. See \reffig{FareyWhSis}.

For example, $K3_1$ is obtained by removing $\Delta_2$ and $\Delta_3$ from the Whitehead sister manifold, and attaching a single tetrahedron and then folding. 
Figure~\ref{Fig:WhSis3TetCusps} shows cusp triangulations of Dehn fillings producing $K5_4$ and $K5_1$.

\begin{figure}
  \includegraphics{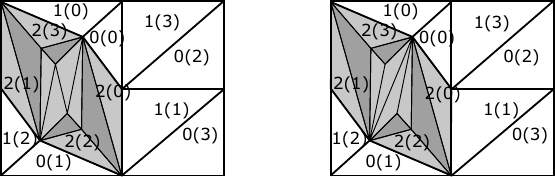}
  \caption{Cusp triangulations of manifolds obtained by $1/2$ and $-1/1$ Dehn fillings. The shaded triangles come from the layered solid torus.}
  \label{Fig:WhSis3TetCusps}
\end{figure}

\subsection{Exceptional manifolds}

As mentioned in the introduction, the methods of \cite{HowieMathewsPurcell} only apply to hyperbolic fillings, which exclude the initial slopes $3/1$, $4/1$, $1/0$, along with $2/1$, $7/2$ and $11/3$. 
We also ignore what we call
the degenerate Dehn fillings.
These are the Dehn fillings for which we do not add any new tetrahedra to perform Dehn filling, but merely remove the two tetrahedra corresponding to the cusp and then fold as in \reffig{FoldTriangles}. There are three such Dehn fillings, corresponding to the three slopes in the Farey triangulation that lie in triangles sharing an edge with our initial triangle in \reffig{FareyWhSis}. These are the slopes $2/1$, $7/2$, and $5/1$; see \reffig{FareyWhSis}. 
Of these, $2/1$ and $7/2$ are not hyperbolic, so not relevant. 
The slope $5/1$ is the manifold {\texttt{m003}}, or the figure-8 sister, built of two regular ideal tetrahedra. Its $\PSL(2,\CC)$ A-polynomial can be computed by hand, or by noting that {\texttt{m003}} is also homeomorphic to the Dehn filling along slope $10/3$.

%%%%%%%%%%%%%%%%%%%%%%%%%%%%%%%%%%%%%%%%%%%%%%%%%%%%%%%%%%%%%%%%%
\section{A-polynomial equations}

Suppose a knot complement is triangulated by $n$ ideal tetrahedra. Label the ideal vertices of each tetrahedron $0,1,2,3$ so that, when viewed from ideal vertex $0$, ideal vertices $1,2,3$ appear in anticlockwise order. We refer to the edges between $0,1$ and $2,3$ as $a$-edges, between $0,2$ and $1,3$ as $b$-edges, and between $0,3$ and $1,2$ as $c$-edges. The $a,b,c$-edges of the $i$th tetrahedron are called $a_i,b_i,c_i$-edges.

We use the deformation variety as in~\cite{Champanerkar:Thesis} to compute the $\PSL(2,\CC)$ A-polynomial. This variety is cut out by \emph{gluing} and \emph{completeness} equations, which can be read off of the Neumann--Zagier matrix. 

In order to determine the Neumann--Zagier matrix for a triangulation, we first define the \textit{incidence matrix} $\In$.
This matrix has a row for each edge class of the triangulation, and a row for each generator of cusp homology; it has three columns for each tetrahedron of the triangulation, labelled $a_i,b_i,c_i$. 
Thus for the Whitehead sister, $\In$ has 4 edge rows, 4 cusp rows, and 4 triples of columns.
Entries in edge rows count the number of $a_i$-, $b_i$- and $c_i$-edges incident with that edge. 
For each cusp we choose oriented representatives $\mathfrak{m}, \mathfrak{l}$ of generators that intersect edges in the cusp triangulation transversely, and so that the algebraic intersection number of $\mathfrak{m}$ and $\mathfrak{l}$ is $1$.
The entries in the cusp rows count the number of $a_i$-, $b_i$- and $c_i$-edges cut off by $\mathfrak{m}$ and $\mathfrak{l}$, with edges to the left counted with $+1$ and edges to the right counted with $-1$.
In our case, the two have curves 
$\mathfrak{m}_0$, $\mathfrak{l}_0$ shown in \reffig{WhSisCuspTri} 
and $\mathfrak{m}_1, \mathfrak{l}_1$ shown in \reffig{MeridLongWhSis}.

\begin{figure}
  \import{figures/}{MeridLongWhSis.pdf_tex}
  \caption{Choices for $\mfm_1$ and $\mfl_1$.}
  \label{Fig:MeridLongWhSis}
\end{figure}

The Neumann--Zagier matrix $\NZ$ is obtained from $\In$ by replacing the $a_i, b_i, c_i$ columns with two columns, subtracting the $c_i$ from the $a_i$ and $b_i$ columns so as to obtain $a_i - c_i$ and $b_i - c_i$ entries. 
Additionally, form a vector $C$ with the same number of rows as $\In$, which is obtained by subtracting all the coordinates in $c$-columns from the vector consisting of $2$s for edge rows and $0$s for cusp rows.  

For the Whitehead sister, we obtain $\In, \NZ$ and $C$ as in \reffig{MatricesWhiteadSister}.

\begin{figure}
\[
\In = \kbordermatrix{
    &  & \Delta_0 &  & \vrule & & \Delta_1 & & \vrule  & & \Delta_2 & & \vrule  & & \Delta_3 & \\
  E_{3/1}  & 0 & 1 & 2 & \vrule  & 2 & 0 & 1 & \vrule & 0 & 0 & 1 & \vrule & 0 & 0 & 1 \\
  E_{4/1}  & 1 & 1 & 0  & \vrule & 0 & 1 & 1  & \vrule & 1 & 0 & 0  & \vrule & 1 & 0 & 0  \\
  E_{1/0}  & 1 & 0 & 0  & \vrule & 0 & 1 & 0  & \vrule & 0 & 1 & 0  & \vrule & 0 & 1 & 0 \\
  E_e     & 0 & 0 & 0  & \vrule & 0 & 0 & 0  & \vrule & 1 & 1 & 1  & \vrule & 1 & 1 & 1 \\
  \hline
  \mfm_0  & 1 & 1 & 0  & \vrule & 0 & -1 & -1  & \vrule & 0 & 0 & 0  & \vrule & 0 & 0 & 0 \\
  \mfl_0  & 0 & 1 & -1  & \vrule & 1 & 0 & -1  & \vrule & 0 & 0 & 0  & \vrule & 0 & 0 & 0 \\
  \mfm_1 & 0 & 0 & 0  & \vrule  & 0 & 0 & 0  & \vrule & 1 & 0 & 0 & \vrule & -1 & 0 & 0 \\
  \mfl_1 & 0 & 0 & 0  & \vrule & 0 & 0 & 0  & \vrule & 0 & 1 & 0 & \vrule & 0 & -1 & 0
}
\]

\[
\NZ = \kbordermatrix{
          &   &\Delta_0 & \vrule & &\Delta_1 & \vrule &  & \Delta_2 & \vrule & &\Delta_3 \\
  E_{3/1}  &-2& -1 &\vrule &1 &-1 & \vrule  & -1 & -1 &\vrule  & -1& -1 \\
  E_{4/1}  & 1 & 1 &\vrule & -1 & 0 &\vrule & 1 & 0 & \vrule & 1 & 0 \\
  E_{1/0}  & 1 & 0 & \vrule & 0 & 1 &\vrule & 0 & 1 & \vrule & 0 & 1  \\
  E_e     & 0 & 0 &\vrule & 0 & 0 &\vrule & 0 & 0 &\vrule & 0 & 0 \\
  \hline
  \mfm_0  & 1 & 1 &\vrule & 1 & 0 &\vrule & 0 & 0 &\vrule & 0 & 0 \\
  \mfl_0  & 1 & 2 &\vrule & 2 & 1 &\vrule & 0 & 0 &\vrule & 0 & 0 \\
  \mfm_1  & 0 & 0 &\vrule & 0 & 0 &\vrule & 1 & 0 &\vrule & -1 & 0 \\
  \mfl_1  & 0 & 0 &\vrule & 0 & 0 &\vrule & 0 & 1 &\vrule & 0 & -1
}
\quad
C = 
\begin{bmatrix}
  -3 \\
  1 \\
  2 \\
  0 \\
  \hline
  1 \\
  2 \\
  0 \\
  0
\end{bmatrix}
\]

  \caption{Incidence and Neumann-Zagier matrices for the Whitehead sister.}
  \label{Fig:MatricesWhiteadSister}
\end{figure}

By \cite[Lemma~3.5]{HowieMathewsPurcell}, using work of Neumann~\cite{Neumann}, there exists an integer vector $B$ such that $\NZ\cdot B = C$, and such that the last entries of $B$, corresponding to the two tetrahedra meeting the second cusp, are all zeros. For our case, we can take 
\[ B= (1, 0, 0, 1, 0, 0, 0, 0)^T. \]

After Dehn filling with a layered solid torus, the Neumann--Zagier matrix of the result can be obtained explicitly from that of the unfilled manifold and the path in the Farey graph~\cite[Prop.~3.11]{HowieMathewsPurcell}.
The portion of $\NZ$ in the top left corner corresponding to edges and tetrahedra outside of the layered solid torus does not change, nor do the entries of the final two cusp rows, corresponding to curves avoiding the layered solid torus.

\subsection{Ptolemy equations}
When a manifold has one cusp, the $n$ edge rows of $\NZ$ have rank $n-1$, so we can remove a row, and the edge rows in the resulting matrix $\NZ^\flat$ are linearly independent \cite{NeumannZagier}.
Denote the vector obtained from $C$ by removing the corresponding row by $C^{\flat}$. This can be done so that one of the first $n-1$ entries of $C^{\flat}$ is nonzero~\cite[Lem.~2.51]{HowieMathewsPurcell}, and we assume our choice has been made so this holds.

The equations defining the A-polynomial involve variables $\gamma_1, \ldots, \gamma_n$ associated to the edge classes $E_1, \ldots, E_n$ of the triangulation.  
Index the edges of each tetrahedron $\Delta_j$ by the ideal vertices at their ends. For $\alpha \beta \in \{01, 02, 03, 12, 13, 23\}$, let $j(\alpha \beta)$ be the index $k$ of the edge $E_k$ to which the edge $\alpha \beta$ of $\Delta_j$ is identified.

\begin{theorem}[\cite{HowieMathewsPurcell}, Theorem~1.1] \label{Thm:HMP}
Let $X$ be a one-cusped manifold with a hyperbolic triangulation $\TT$, with tetrahedra $\Delta_1, \ldots, \Delta_n$, $\NZ^{\flat}$, $C^{\flat}$ and $B=(B_1, B_1', \ldots, B_n, B_n')^T$ as above. 
Denote the entries of the $\mathfrak{m}$ and $\mathfrak{l}$ rows of $\NZ^\flat$ in the $\Delta_j$ columns by $\mu_j, \mu_j'$ and $\lambda_j$, $\lambda_j'$ respectively.

For each tetrahedron $\Delta_j$ of $\TT$, the \emph{Ptolemy equation} of $\Delta_j$ is
\begin{equation}\label{Eqn:Ptolemy}
\left( -1 \right)^{B'_j} \ell^{-\mu_j/2} m^{\lambda_j/2} \gamma_{j(01)} \gamma_{j(23)}
+
\left( - 1 \right)^{B_j} \ell^{-\mu'_j/2} m^{\lambda'_j/2} \gamma_{j(02)} \gamma_{j(13)}
-
\gamma_{j(03)} \gamma_{j(12)}
= 0.
\end{equation}

Setting the $\gamma$ variable corresponding to the row removed from $\NZ$ equal to $1$, and eliminating the other $\gamma$ variables, solving the Ptolemy equations for $m$ and $\ell$, we obtain a factor of the $\PSL(2,\CC)$ A-polynomial; this is the same factor obtained in~\cite{Champanerkar:Thesis}. 
\end{theorem}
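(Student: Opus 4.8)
The plan is to prove \refthm{HMP} by exhibiting an explicit birational correspondence between the Ptolemy variety cut out by the equations \eqref{Eqn:Ptolemy} and Champanerkar's deformation variety, compatible with the projection to the eigenvalue coordinates $(m,\ell)$, so that the two yield the same factor of the A-polynomial. Recall that Champanerkar's factor is the defining polynomial of the image of the deformation variety --- the set of shape assignments $(z_1,\dots,z_n)$ satisfying Thurston's gluing equations together with the prescribed meridian and longitude holonomies --- under the map sending a shape solution to its boundary eigenvalues $(m,\ell)$. The gluing and holonomy exponents are exactly the edge and cusp rows of $\NZ$, so both varieties are governed by the same Neumann--Zagier data; the content of the theorem is that passing to edge (Ptolemy) coordinates leaves the resulting $(m,\ell)$-image unchanged.

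First I would set up the change of variables. To each edge class $E_k$ assign the Ptolemy variable $\gamma_k$, and to each tetrahedron $\Delta_j$ attach a shape parameter defined as a cross-ratio of the four Ptolemy variables on its edges, decorated by the holonomy factors appearing in \eqref{Eqn:Ptolemy}:
\[
z_j \;=\; (-1)^{B_j'}\,\ell^{-\mu_j/2}m^{\lambda_j/2}\,
\frac{\gamma_{j(01)}\gamma_{j(23)}}{\gamma_{j(03)}\gamma_{j(12)}}.
\]
Dividing \eqref{Eqn:Ptolemy} through by $\gamma_{j(03)}\gamma_{j(12)}$ shows that it is precisely the shape relation $z_j+(1-z_j)=1$, where the complementary $b$-edge ratio is identified with $1-z_j$; equivalently, the three decorated cross-ratios attached to the $a$-, $b$-, and $c$-edges of $\Delta_j$ form a consistent shape triple with product $-1$. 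The signs $(-1)^{B_j},(-1)^{B_j'}$ are exactly what make this lift consistent: by the existence of $B$ with $\NZ\cdot B=C$ (via Neumann, as cited before \refthm{HMP}), the products of signs around each edge reproduce the correct branch of the relation forced by the $2$'s in the edge rows of $C$, while the $\ell$ and $m$ factors deform the structure away from completeness.

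The key step is then to check that the gluing equations and the holonomy conditions are simultaneously encoded. Because the $\gamma_k$ are assigned to edge \emph{classes}, the product of shape parameters around any edge telescopes --- each $\gamma$ appears once in a numerator and once in a denominator --- so $\prod z=1$ holds automatically, recovering the gluing equations. Simultaneously, since the exponents of $\ell$ and $m$ in $z_j$ are read directly from the $\mathfrak{m}$- and $\mathfrak{l}$-rows of $\NZ^\flat$, the meridian and longitude holonomies computed from $(z_1,\dots,z_n)$ evaluate, by construction, to the prescribed eigenvalues. Thus a solution of \eqref{Eqn:Ptolemy} produces a point of the deformation variety with eigenvalues $(m,\ell)$, and conversely every such point arises this way. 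The scaling $\gamma_k\mapsto t\,\gamma_k$ leaves every cross-ratio $z_j$ invariant, so it is the only redundancy; fixing it by setting the $\gamma$ corresponding to the removed edge row equal to $1$ makes the correspondence birational. Eliminating the remaining $\gamma$'s and projecting to $(m,\ell)$ therefore yields the same image, hence the same defining polynomial factor, as Champanerkar's construction.

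The main obstacle is the sign bookkeeping together with the verification that the birational map is genuinely onto the relevant component, rather than merely a dominant rational map onto a proper subvariety. Concretely, one must confirm that edge-class well-definedness of the $\gamma_k$ is equivalent to --- not merely sufficient for --- the full gluing system after a single edge row is removed, and that this removal (legitimate since the edge rows have rank $n-1$ with one entry of $C^\flat$ nonzero, as recalled before the theorem) does not drop a component of the $(m,\ell)$-image. The most delicate point is matching orientations and the half-integer exponents $\mu_j/2,\lambda_j/2$ so that the $(m,\ell)$ recovered from the Ptolemy side are the \emph{eigenvalues} and not their squares; this is exactly where the flattening vector $B$ and the Neumann--Zagier normalization enter. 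Once these are pinned down, the equality of the two $(m,\ell)$-images --- and hence of the extracted A-polynomial factor --- follows formally. This also explains why one obtains only \emph{a} factor: the construction captures the component containing the discrete faithful representation, and other components or lifts need not be seen.
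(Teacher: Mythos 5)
You should first note a structural point: the paper you are working against does not actually prove this statement. Theorem~\ref{Thm:HMP} is imported verbatim as Theorem~1.1 of \cite{HowieMathewsPurcell}, and the present paper only applies it; there is no internal proof to compare against. Measured against the proof in that cited source, your strategy is the right one --- the actual argument there is indeed a monomial change of variables between shape coordinates on Champanerkar's deformation variety and edge (Ptolemy) coordinates, with the signs controlled by a flattening-type vector $B$ satisfying $\NZ \cdot B = C$ via Neumann's work, and with the rank facts of Neumann--Zagier justifying the removal of one edge row. So as a reconstruction of the intended route, your outline is faithful.

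As a proof, however, it has genuine gaps at exactly the load-bearing steps. First, the telescoping claim is false as literally stated: in the product of shape parameters around an edge class, a given $\gamma_k$ does not simply ``appear once in a numerator and once in a denominator'' of successive $z_j$'s. The product involves $a$-, $b$-, and $c$-type parameters; your single definition of $z_j$ only covers the $a$-type, and the $b$- and $c$-type parameters ($w_j = 1 - z_j$ and $-w_j/z_j$) are decorated monomials in the $\gamma$'s only \emph{after} the Ptolemy relation is assumed. Even granting that, verifying that the gluing equation for each edge holds with value exactly $+1$ (not merely $\pm 1$), and that the $\ell$, $m$ exponents in the product vanish on edge rows while reproducing the holonomy conditions on cusp rows, is a nontrivial computation with the rows of $\NZ$ and the congruence $\NZ \cdot B \equiv C \pmod 2$; you assert it rather than carry it out. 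Second, the converse direction --- that every point of the deformation variety lifts to a $\gamma$-solution, with global scaling as the \emph{only} indeterminacy --- requires solving an explicit linear system in logarithmic $\gamma$-variables and invoking the rank-$(n-1)$ statement together with the nonvanishing entry of $C^\flat$; you name the issue but do not resolve it, and without it you only get a dominant map in one direction, which does not yet give equality of the $(m,\ell)$-images. Third, you explicitly defer the normalization question (eigenvalues versus their squares, i.e.\ the half-integer exponents $\mu_j/2$, $\lambda_j/2$), which is precisely where agreement with Champanerkar's factor, rather than some power or rescaling of it, must be pinned down. In short: correct architecture, matching the cited source, but the three verifications you flag as ``delicate'' or ``formal'' are the proof, and they are missing.
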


For 1-cusped manifolds obtained by Dehn filling the Whitehead sister, the edge classes include $E_{3/1}, E_{4/1}, E_{1/0}$ (recall the labels from \refsec{triangulation_of_Whitehead_sister}), as well as additional edge classes that lie within the layered solid torus. Following \cite{HowieMathewsPurcell}, we label these by their corresponding slope in the Farey graph. 

By switching variables, we may obtain (a factor of) the $\SL(2,\CC)$ A-polynomial.

\begin{corollary}[\cite{HowieMathewsPurcell}, Corollary~1.4]
After setting $M = m^{1/2}$ and $L = \ell^{1/2}$, eliminating the $\gamma$ variables from the polynomial Ptolemy equations as above
yields a polynomial in $M$ and $L$ which contains, as a factor, the factor of the $\SL(2,\C)$ A-polynomial describing hyperbolic structures.
\end{corollary}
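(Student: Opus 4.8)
The plan is to deduce the statement from \refthm{HMP} by rereading the same elimination in the square-root variables. First I would observe that the only non-polynomial feature of the Ptolemy equation \refeqn{Ptolemy} is the pair of monomials $\ell^{-\mu_j/2}m^{\lambda_j/2}$ and $\ell^{-\mu_j'/2}m^{\lambda_j'/2}$, whose exponents are half-integers because $\mu_j,\mu_j',\lambda_j,\lambda_j'$ are arbitrary integers. Substituting $M=m^{1/2}$ and $L=\ell^{1/2}$ replaces these by $L^{-\mu_j}M^{\lambda_j}$ and $L^{-\mu_j'}M^{\lambda_j'}$, so that every Ptolemy equation becomes genuinely polynomial in $M$, $L$, and the $\gamma$ variables. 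Thus the elimination of the $\gamma$'s (setting the distinguished variable equal to $1$, and using the linear independence of the edge rows of $\NZ^\flat$) can be carried out over the polynomial ring in $M,L$ directly, producing a polynomial $Q(M,L)$; this is the object named in the statement.

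Second, I would identify $Q$ with an $\SL(2,\C)$ eigenvalue polynomial. In the holonomy reconstruction underlying \refthm{HMP}, a solution of the Ptolemy equations determines a representation together with diagonalisations of the meridian and longitude; the signs $(-1)^{B_j}$ and $(-1)^{B_j'}$, with $B$ the integer vector supplied by Neumann's lemma, are exactly the data promoting this to a representation into $\SL(2,\C)$ rather than $\PSL(2,\C)$, and in that lift the meridian and longitude eigenvalues are precisely $M=m^{1/2}$ and $L=\ell^{1/2}$. Hence the variety cut out by the Ptolemy equations in the variables $(M,L,\gamma)$ projects to the $\SL(2,\C)$ eigenvalue variety of the filled manifold, and $Q(M,L)$ is a defining polynomial for the closure of that projection.

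Finally I would locate the geometric factor. The discrete faithful $\PSL(2,\C)$ holonomy of the complete structure lifts to $\SL(2,\C)$ by the standard lifting result for $3$-manifold groups (Thurston, Culler), and this lift lies on the component that \refthm{HMP} already detects, in the $m=M^2,\ \ell=L^2$ coordinates, via the $\PSL(2,\C)$ A-polynomial factor. Consequently the canonical component of the $\SL(2,\C)$ character variety maps onto that $\PSL$ component under $(M,L)\mapsto(M^2,L^2)$; its image in eigenvalue coordinates is cut out by the factor of the $\SL(2,\C)$ A-polynomial describing hyperbolic structures, and this factor therefore divides $Q(M,L)$, which is what the corollary asserts.

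I expect the main obstacle to be the middle step: verifying that the sign data $(-1)^{B_j}$ assemble into a consistent \emph{global} $\SL(2,\C)$ lift, equivalently that $B$ encodes the relevant $2$-cocycle (spin) information, so that the square-root variables $M,L$ really are $\SL(2,\C)$ eigenvalues and not merely formal square roots of the $\PSL$ parameters. Once this is established, the passage from \refthm{HMP} to the corollary is bookkeeping, since the elimination is literally the same computation read in $(M,L)$ instead of $(m,\ell)$.
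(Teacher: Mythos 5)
The paper itself offers no proof of this statement: it is quoted verbatim as Corollary~1.4 of \cite{HowieMathewsPurcell}, so there is no in-paper argument to compare yours against. Judged on its own terms, your outline has the right shape at both ends. The observation that substituting $L=\ell^{1/2}$, $M=m^{1/2}$ turns every coefficient $\ell^{-\mu_j/2}m^{\lambda_j/2}$ into the Laurent monomial $L^{-\mu_j}M^{\lambda_j}$, so that the system becomes polynomial in $(L,M,\gamma)$ and the elimination can be read off unchanged, is correct and immediate; and invoking the lift of the discrete faithful $\PSL(2,\C)$ holonomy to $\SL(2,\C)$ (Thurston, Culler) is the standard input for locating the geometric factor.

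The difficulty is your middle step, and you have flagged it rather than closed it. The corollary requires that every point $(L_0,M_0)$ on the geometric component of the $\SL(2,\C)$ eigenvalue variety admits a solution of the Ptolemy system \emph{with those particular square roots} of $\ell_0=L_0^2$ and $m_0=M_0^2$. \refthm{HMP} only guarantees solvability at $(\ell_0,m_0)$ for \emph{some} determination of $\ell_0^{1/2}$ and $m_0^{1/2}$; a priori that determination could be $(-L_0,M_0)$, say, in which case the eliminated polynomial $Q(L_0,M_0)$ need not vanish and the divisibility claim would fail. Your proposed resolution --- that the signs $(-1)^{B_j}$, $(-1)^{B_j'}$ supplied by Neumann's lemma encode exactly the obstruction data for a global $\SL(2,\C)$ lift whose meridian and longitude eigenvalues are $M$ and $L$ rather than formal square roots --- is the right idea, but as you yourself acknowledge it is precisely the content of the corollary and is asserted rather than established; it is proved in \cite{HowieMathewsPurcell}, not reproduced here. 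So the proposal correctly isolates the crux but leaves it as a genuine gap.
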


For a family of manifolds obtained by Dehn filling a fixed parent manifold, some Ptolemy equations are fixed.

\begin{theorem}[\cite{HowieMathewsPurcell}, Theorem~1.5(i)]
\label{Thm:HMPFilling}
Suppose $X$ has two cusps $\mfc_0, \mfc_1$, and is triangulated such that only two tetrahedra meet $\mfc_1$, and generating curves $\mfm_0, \mfl_0$ on $\mfc_0$ avoid these tetrahedra. Then for any Dehn filling on $\mfc_1$ obtained by attaching a layered solid torus, the Ptolemy equations 
  corresponding to tetrahedra lying outside the layered solid torus are fixed, the same as 
for the unfilled manifold $X$.
\end{theorem}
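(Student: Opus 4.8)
The plan is to check, ingredient by ingredient, that the three pieces of data entering the Ptolemy equation \refeqn{Ptolemy} of a tetrahedron $\Delta_j$ lying outside the layered solid torus $L$ are unaffected by the filling. These pieces are: (i) the six edge--class variables $\gamma_{j(\alpha\beta)}$ attached to the edges of $\Delta_j$; (ii) the monomials $\ell^{-\mu_j/2}m^{\lambda_j/2}$ and $\ell^{-\mu'_j/2}m^{\lambda'_j/2}$, determined by the entries $\mu_j,\mu'_j,\lambda_j,\lambda'_j$ of the $\mfm_0,\mfl_0$ rows of $\NZ^\flat$ in the $\Delta_j$ columns; and (iii) the signs $(-1)^{B_j},(-1)^{B'_j}$. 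If all three coincide before and after filling, then \refeqn{Ptolemy} is literally the same polynomial, and the theorem follows by letting $\Delta_j$ range over all outside tetrahedra.

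For (ii) I would appeal directly to \cite[Prop.~3.11]{HowieMathewsPurcell}. Only the two tetrahedra meeting $\mfc_1$ are removed and replaced by $L$, and by hypothesis the generating curves $\mfm_0,\mfl_0$ avoid those tetrahedra, hence avoid $L$ entirely; so their rows in the Neumann--Zagier matrix are unchanged. In particular $\mu_j,\mu'_j,\lambda_j,\lambda'_j$, read off from the $\Delta_j$ columns of these two rows, are identical to their values for $M$. Passing from $\NZ$ to $\NZ^\flat$ only deletes an \emph{edge} row, leaving the cusp rows untouched, so nothing changes here.

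For (i) the key point is that $\Delta_j$ does not meet $\mfc_1$: by hypothesis only the two extracted tetrahedra do. Thus every ideal vertex of $\Delta_j$ lies on $\mfc_0$, so both endpoints of each of its six edges lie on $\mfc_0$. Any edge class containing such an edge consists only of edges with both ends on $\mfc_0$, so it is distinct from the edge $E_e$ joining the two cusps and from every new edge class created in the interior of $L$. After extracting the two tetrahedra meeting $\mfc_1$, the outside tetrahedra form a manifold whose boundary is the punctured torus onto which $L$ is layered; the surviving edge classes are exactly the boundary slopes together with any edge classes interior to the outside piece, and none of these is split or relabelled by the layering. Since moreover the face pairings among the outside tetrahedra are untouched, the assignment $\alpha\beta\mapsto j(\alpha\beta)$ is the same before and after filling, so the same six $\gamma$--variables appear, in the same $a/b/c$ positions, in \refeqn{Ptolemy}.

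The main obstacle is (iii), the parities $B_j,B'_j$. By \cite[Lemma~3.5]{HowieMathewsPurcell}, for the two--cusped $M$ the vector $B$ with $\NZ\cdot B=C$ can be chosen to vanish on the two tetrahedra meeting $\mfc_1$; thus $B$ is supported on the outside tetrahedra and its outside entries are exactly the $B_j,B'_j$ we wish to preserve. The plan is to show that this outside part can be completed, by prescribing entries on the tetrahedra of $L$, to a vector $\widetilde{B}$ solving the filled equation $\widetilde{\NZ}\cdot\widetilde{B}=\widetilde{C}$. Restricting $\widetilde{\NZ}\cdot\widetilde{B}=\widetilde{C}$ to the rows indexed by surviving edge classes and by $\mfm_0,\mfl_0$, and using that $B$ vanished on the extracted tetrahedra together with the unchanged blocks from (i) and (ii), these rows reduce to conditions of the form $\sum_{c\in L}\widetilde{\NZ}_{r,c}\widetilde{B}_c=\widetilde{C}_r-C_r$ (the two cusp rows becoming $0=0$, since $\mfm_0,\mfl_0$ have no $L$--entries), which constrain only the $L$--entries of $\widetilde{B}$; the remaining rows are those indexed by the new edge classes internal to $L$. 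Solving this system for the $L$--entries with the outside entries held fixed at their values from $M$ is the crux. Here I would use the explicit Neumann--Zagier description of a layered solid torus in \cite[Prop.~3.11]{HowieMathewsPurcell} together with Neumann's solvability result \cite{Neumann}, checking that a completion with the prescribed outside entries exists over $\Z$; since only the residues $B_j,B'_j\bmod 2$ enter \refeqn{Ptolemy}, one has extra latitude and need only match parities. Once such a $\widetilde{B}$ is produced, its outside entries agree with those of $B$, ingredients (i)--(iii) all coincide, and \refeqn{Ptolemy} for each outside $\Delta_j$ is unchanged, proving the theorem.
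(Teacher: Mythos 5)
First, a point of comparison: the paper does not prove this statement at all --- it is imported verbatim, with citation, as Theorem~1.5(i) of \cite{HowieMathewsPurcell}, so there is no in-paper proof to measure yours against. Judged on its own terms, your decomposition into the three ingredients --- the edge-class variables, the cusp-row monomials, and the signs coming from $B$ --- is the right way to organise the argument, and your handling of (i) and (ii) is essentially correct for nondegenerate fillings: the outside face pairings and the $\mfm_0,\mfl_0$ rows of the Neumann--Zagier matrix are untouched, which is exactly what the paper records after citing \cite[Prop.~3.11]{HowieMathewsPurcell}. One caveat on (i): the boundary slopes of the outside piece are shared with the layered solid torus, and for very short Farey paths the folding step can identify two of these edge classes; this is precisely why the paper excludes the degenerate slopes $2, 7/2, 5$. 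Your sentence ``none of these is split or relabelled by the layering'' asserts, rather than proves, that this does not happen.

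The genuine gap is (iii). You correctly isolate it as the crux, but then you only describe a plan (``Here I would use \dots\ together with Neumann's solvability result \dots\ checking that a completion with the prescribed outside entries exists over $\Z$''). Neumann's theorem guarantees that \emph{some} integer solution of $\widetilde{\NZ}\cdot\widetilde{B}=\widetilde{C}$ exists for the filled manifold; it does not let you prescribe the outside entries to agree with those of $B$, nor even their parities, since correcting a given Neumann solution by a kernel element can disturb the outside coordinates. Your reduction is set up correctly --- because $B$ vanishes on the two extracted tetrahedra, the surviving edge rows and the new internal edge rows impose conditions on the $L$-entries of $\widetilde{B}$ alone --- but whether that restricted integer system is solvable is exactly the content of the theorem, and it requires the explicit layered-solid-torus block of $\widetilde{\NZ}$ and $\widetilde{C}$ (in \cite{HowieMathewsPurcell} the $L$-entries are written down concretely). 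Without carrying out that computation, the claim that $B_j, B_j'$ are preserved, and hence that the Ptolemy equations of the outside tetrahedra are literally unchanged, remains unproved.
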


We now apply these results to Dehn fillings of the Whitehead sister. Recall that we are performing Dehn fillings along the boundary component that corresponds to the unknotted component of the $(-2,3,8)$-pretzel link. 

\begin{lemma}\label{Lem:Tetr01}
Let $X$ be obtained by a nondegenerate, hyperbolic Dehn filling of the Whitehead sister. Then the Ptolemy equations corresponding to tetrahedra 0 and 1 are as follows:
\[ \ell^{-1/2}m^{1/2}\gamma_{1/0}\gamma_{4/1} -\ell^{-1/2}m\gamma_{4/1}\gamma_{3/1} - \gamma_{3/1}^2 =0 \]
\[ -\ell^{-1/2}m\gamma_{3/1}^2 + m^{1/2}\gamma_{1/0}\gamma_{4/1} - \gamma_{3/1}\gamma_{4/1} =0 \]
\end{lemma}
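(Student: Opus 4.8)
The plan is to reduce everything to the general machinery already in place. By \refthm{HMPFilling}, the tetrahedra $\Delta_0$ and $\Delta_1$ lie outside the layered solid torus (only $\Delta_2,\Delta_3$ are removed and replaced), so their Ptolemy equations coincide with those of the unfilled Whitehead sister. Hence it suffices to compute these two equations a single time, directly from the Neumann--Zagier and incidence data recorded in \reffig{MatricesWhiteadSister}, and then substitute into the Ptolemy formula \refeqn{Ptolemy} of \refthm{HMP}. One preliminary point must be addressed: \refthm{HMP} is stated for one-cusped manifolds, whereas the unfilled Whitehead sister has two cusps. This causes no difficulty, because after filling cusp $\mfc_1$ the resulting $M$ is one-cusped, its meridian and longitude are exactly $\mfm_0,\mfl_0$, and the entries of the $\mfm_0,\mfl_0$ rows of $\NZ$ in the $\Delta_0,\Delta_1$ columns are unchanged by the filling (these curves avoid the solid torus). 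Moreover, passing from $\NZ$ to $\NZ^\flat$ only deletes an edge row, which affects neither these cusp-row exponents nor the edge-class identifications. Thus I may read all coefficients off the unfilled matrices.

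First I would extract the scalar ingredients of \refeqn{Ptolemy} for $j\in\{0,1\}$. Grouping the coordinates of $B=(1,0,0,1,0,0,0,0)^T$ into pairs by tetrahedron gives $(B_0,B_0')=(1,0)$ and $(B_1,B_1')=(0,1)$, which fix the signs $(-1)^{B_j'}$ and $(-1)^{B_j}$. Restricting the $\mfm_0$ and $\mfl_0$ rows of $\NZ$ to the $\Delta_0,\Delta_1$ columns gives the exponents $(\mu_0,\mu_0')=(1,1)$, $(\lambda_0,\lambda_0')=(1,2)$ and $(\mu_1,\mu_1')=(1,0)$, $(\lambda_1,\lambda_1')=(2,1)$, which supply the monomial prefactors $\ell^{-\mu_j/2}m^{\lambda_j/2}$ and $\ell^{-\mu_j'/2}m^{\lambda_j'/2}$.

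Next I would determine the three edge-product terms. Recalling that the $a$-edges are $01,23$, the $b$-edges are $02,13$, and the $c$-edges are $03,12$, the products $\gamma_{j(01)}\gamma_{j(23)}$, $\gamma_{j(02)}\gamma_{j(13)}$, $\gamma_{j(03)}\gamma_{j(12)}$ are precisely the products of the $\gamma$'s over the two $a$-, two $b$-, and two $c$-edges of $\Delta_j$. Crucially, the equation only ever sees these as unordered pairs, so the edge-class \emph{counts} recorded in the columns of $\In$ are enough, and no finer labelling of individual edges is needed. Reading the $\Delta_0$ columns of $\In$ places the $a$-edges in classes $E_{4/1},E_{1/0}$, the $b$-edges in $E_{3/1},E_{4/1}$, and both $c$-edges in $E_{3/1}$; reading the $\Delta_1$ columns places both $a$-edges in $E_{3/1}$, the $b$-edges in $E_{4/1},E_{1/0}$, and the $c$-edges in $E_{3/1},E_{4/1}$.

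Finally I would substitute into \refeqn{Ptolemy} and simplify the monomials, which produces the two displayed equations directly. The only real obstacle is bookkeeping, and it has three delicate spots: matching the paper's $0$-based tetrahedron labels to the $1$-based indexing of \refthm{HMP}; observing the cross-pairing in \refeqn{Ptolemy}, where the first term couples the sign $(-1)^{B_j'}$ with the \emph{unprimed} exponents $\mu_j,\lambda_j$ while the second couples $(-1)^{B_j}$ with the primed $\mu_j',\lambda_j'$; and pairing each coordinate of $B$ with the correct $a-c$ or $b-c$ column. Once these conventions are pinned down, the verification is a routine substitution.
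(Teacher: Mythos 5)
Your proposal is correct and follows essentially the same route as the paper: invoke Theorem~\ref{Thm:HMPFilling} to reduce to the unfilled Whitehead sister, then read the signs off $B=(1,0,0,1,0,0,0,0)^T$, the exponents $(\mu_j,\mu_j',\lambda_j,\lambda_j')$ off the $\mfm_0,\mfl_0$ rows of $\NZ$, and the edge-class identifications off the $\Delta_0,\Delta_1$ columns of $\In$, before substituting into \refeqn{Ptolemy}. All of your extracted data match the paper's (the paper only works tetrahedron $0$ explicitly and says ``similarly'' for tetrahedron $1$, which your computation confirms), and your extra remarks on the one-cusped hypothesis and on the products being unordered are sound points of care rather than deviations.
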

Substituting $\ell=L^2$ and $m=M^2$ in \reflem{Tetr01} yields the $\SL(2,\CC)$ equations.

\begin{proof}[Proof of \reflem{Tetr01}]
Tetrahedra 0 and 1 lie outside the layered solid torus in any Dehn filling, and the triangulation of the Whitehead sister satisfies the requirements of \refthm{HMPFilling}. Thus the Ptolemy equations of tetrahedra 0 and 1 satisfy the conclusions of that theorem, and we may read the equations off of \eqref{Eqn:Ptolemy} using the Neumann--Zagier matrix and $B$-vector computed above for the Whitehead sister. 

For tetrahedron 0, we have $(\mu_0,\mu_0')=(1,1)$ and $(\lambda_0,\lambda_0')=(1,2)$ from the $\NZ$ matrix, so the corresponding Ptolemy equation is
\[ (-1)^{0}\ell^{-1/2}m^{1/2}\gamma_{0(01)}\gamma_{0(23)} + (-1)^{1}\ell^{-1/2}m^{2/2}\gamma_{0(02)}\gamma_{0(13)} - \gamma_{0(03)}\gamma_{0(12)} =0. \]

The following edges are identified to edge classes $E_{3/1}$, $E_{4/1}$ and $E_{1/0}$, respectively:
\[ 0(13),0(12),0(03) \sim E_{3/1}, \qquad 0(02),0(23)\sim E_{4/1}, \quad\text{ and }\quad 0(01)\sim E_{1/0}. \] 
Hence we obtain the Ptolemy equation for tetrahedron 0 as
\[ \ell^{-1/2}m^{1/2}\gamma_{1/0}\gamma_{4/1} -\ell^{-1/2}m\gamma_{4/1}\gamma_{3/1} - \gamma_{3/1}^2 =0. \]

For tetrahedron 1, we similarly obtain the second Ptolemy equation.
%For tetrahedron 1, we have $(\mu_1,\mu_1')=(1,0)$ and $(\lambda_1,\lambda_1')=(2,1)$ so the corresponding Ptolemy equation is
%\[ (-1)^{1}\ell^{-1/2}m^{2/2}\gamma_{1(01)}\gamma_{1(23)} + (-1)^{0}\ell^{-0/2}m^{1/2}\gamma_{1(02)}\gamma_{1(13)} - \gamma_{1(03)}\gamma_{1(12)} =0. \]
%
%The following edges are identified to edge classes $E_{3/1}$, $E_{4/1}$ and $E_{1/0}$, respectively:
%\[ 1(01),1(23),1(03) \sim E_{3/1}, \qquad 1(12),1(13)\sim E_{4/1}, \quad\text{ and }\quad 1(02)\sim E_{1/0}. \] 
%Hence, the Ptolemy equation for tetrahedron 1 can be rewritten as
%\[ -\ell^{-1/2}m\gamma_{3/1}^2 + m^{1/2}\gamma_{1/0}\gamma_{4/1} - \gamma_{3/1}\gamma_{4/1} =0. \]
\end{proof}

\begin{lemma}\label{Lem:Gamma4/1Gamma1/0}
Set $\gamma_{3/1}=1$. Then variables $\gamma_{1/0}$ and $\gamma_{4/1}$ satisfy:
\begin{gather*}
  \gamma_{4/1} = \frac{-\ell+m}{\sqrt{\ell}(-1+m)}, \quad
  \gamma_{1/0} = \frac{-\ell+m^2}{\sqrt{m}(-\ell+m)}
\end{gather*}
\end{lemma}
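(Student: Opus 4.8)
The plan is to solve the two Ptolemy equations from \reflem{Tetr01} simultaneously for the unknowns $\gamma_{1/0}$ and $\gamma_{4/1}$ after imposing the normalization $\gamma_{3/1}=1$. Since \reflem{Tetr01} is already established, I may treat its two equations as given; setting $\gamma_{3/1}=1$ turns them into two polynomial equations in the two unknowns $\gamma_{1/0}$ and $\gamma_{4/1}$ with coefficients in the field $\CC(\sqrt{\ell},\sqrt{m})$.

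First I would substitute $\gamma_{3/1}=1$ into both equations, yielding
\[
  \ell^{-1/2}m^{1/2}\gamma_{1/0}\gamma_{4/1}-\ell^{-1/2}m\,\gamma_{4/1}-1=0,
  \qquad
  -\ell^{-1/2}m+m^{1/2}\gamma_{1/0}\gamma_{4/1}-\gamma_{4/1}=0.
\]
Both equations are linear in the product $\gamma_{1/0}\gamma_{4/1}$ and in $\gamma_{4/1}$ separately, so the natural move is to treat $P:=\gamma_{1/0}\gamma_{4/1}$ and $\gamma_{4/1}$ as two intermediate unknowns and solve the resulting \emph{linear} $2\times 2$ system. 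From the first equation one isolates $P=\ell^{1/2}m^{-1/2}\bigl(1+\ell^{-1/2}m\,\gamma_{4/1}\bigr)$; substituting into the second gives a single linear equation for $\gamma_{4/1}$, which I would solve and then simplify to the claimed closed form $\gamma_{4/1}=(-\ell+m)/(\sqrt{\ell}(-1+m))$. Once $\gamma_{4/1}$ is known, $\gamma_{1/0}=P/\gamma_{4/1}$ recovers the second formula after algebraic simplification.

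The computation is entirely mechanical — elimination in a linear system followed by rationalizing and factoring — so there is no deep obstacle; the only real care needed is bookkeeping of the half-integer powers $\ell^{\pm1/2}$ and $m^{\pm1/2}$ so that the final expressions match the stated form with $\sqrt{\ell}$ and $\sqrt{m}$ in the denominators. The step I expect to be the main nuisance is verifying that the two formulas are genuinely \emph{consistent}: because the system consists of two equations in two unknowns, solving linearly as above automatically produces a solution, but I would double-check by back-substituting both closed forms into the original pair of equations and confirming the identities reduce to $0=0$. I would also note the implicit genericity assumption that the denominators $\sqrt{\ell}(-1+m)$ and $\sqrt{m}(-\ell+m)$ are nonzero, which holds away from the excluded loci $m=1$ and $\ell=m$; these correspond to degenerate points of the deformation variety and do not affect the A-polynomial factor being computed.
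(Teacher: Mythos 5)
Your proposal is correct and matches the paper's proof, which simply says to set $\gamma_{3/1}=1$ in the equations of Lemma~\ref{Lem:Tetr01} and solve for $\gamma_{1/0}$ and $\gamma_{4/1}$; your linearization in $\gamma_{4/1}$ and the product $\gamma_{1/0}\gamma_{4/1}$ is exactly the mechanical elimination the paper leaves implicit, and the algebra checks out.
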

Again, $SL(2,\C)$ versions can be obtained by substituting $\ell = L^2$ and $m=M^2$.

\begin{proof}
Set $\gamma_{3/1}=1$ and use the equations of the previous lemma. Solving for $\gamma_{0/1}$ and $\gamma_{4/1}$ in terms of $\ell$, $m$, or $L$, $M$ gives the result. 
\end{proof}

\subsection{A-polynomials for Dehn fillings}

We now establish some notation for the slopes in a layered solid torus with reference to the corresponding walk in the Farey triangulation. In the initial step, moving from one triangle to another across an edge, there are four slopes involved. One slope lies on the initial triangle but not the new one; label this $o_0$ (for old). One slope belongs to the new triangle, but not the old; label this $h_0$ (for heading). Two slopes lie on the edge shared by both triangles: label the one to the left $f_0$ and the one to the right $p_0$. For the $k$th step ($k>0$), again label the old slope $o_k$ and the new/heading slope $h_k$. Label the slope around which the $k$th step pivots $p_k$ (for pivot), and the slope that fans out around the pivot $f_k$ (for fan). See \reffig{SlopeLabels}.

\begin{figure}
    \centering
    \includegraphics[width=0.8\textwidth]{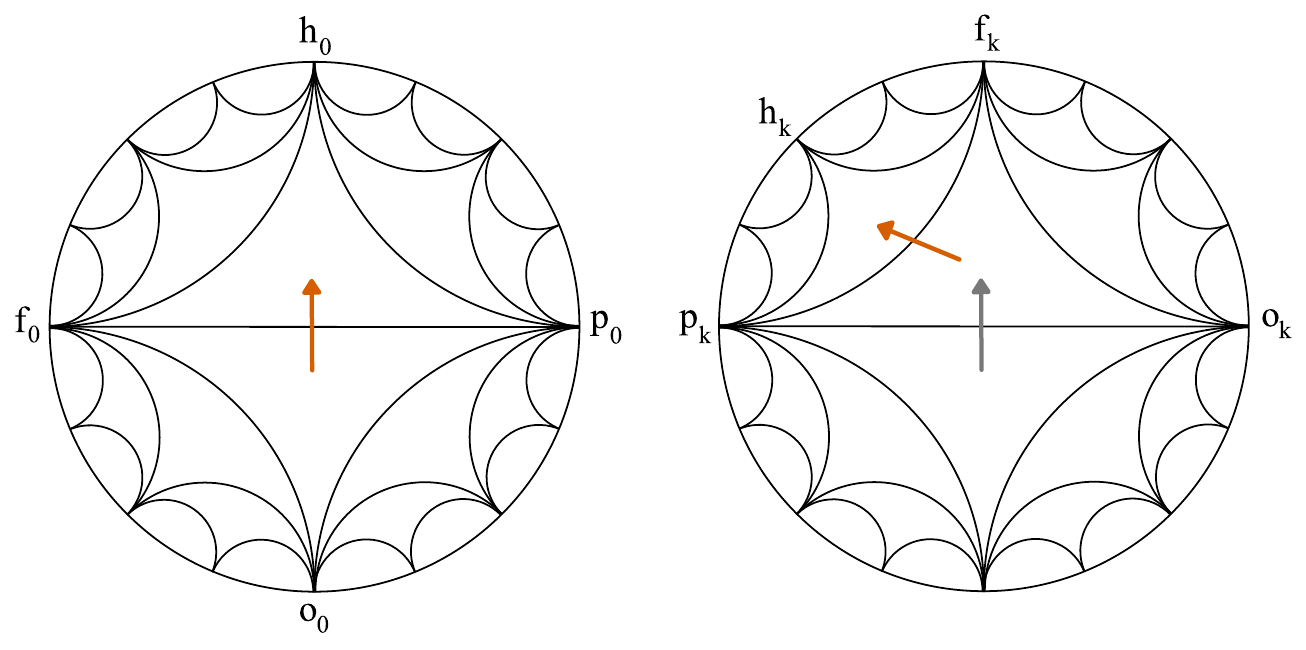}
    \caption{Slope labels for the $k$th step. Left: $k=0$. Right: $k>0$.}
    \label{Fig:SlopeLabels}
\end{figure}

Each edge class in the layered solid torus has a slope $p/q$ and we label the corresponding variable $\gamma_{p/q}$. Theorem~3.17(ii) of \cite{HowieMathewsPurcell} can then be stated as follows, where the length of the walk in the Farey triangulation is denoted $N$.

\begin{theorem}[\cite{HowieMathewsPurcell}, Theorem~3.17(ii)]
\label{Thm:HMPFilling2}
With notation as above, the Ptolemy equations for tetrahedra in a layered solid torus are 
\[ \gamma_{o_k}\gamma_{h_k}+\gamma_{p_k}^2-\gamma_{f_k}^2=0, \text{ for } 0\leq k\leq N-1. \]
When $k=N$ we have the \textit{folding equation} $\gamma_{p_N}=\gamma_{f_N}$.
\end{theorem}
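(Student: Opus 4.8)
The plan is to specialize the general Ptolemy equation~\refeqn{Ptolemy} to a single tetrahedron $\Delta_j$ inside the layered solid torus, and to read its three terms off the combinatorics of one Farey diagonal exchange. First I would note that every layered tetrahedron is disjoint from the remaining cusp: by the hypothesis of~\refthm{HMPFilling} the generating curves $\mfm_0,\mfl_0$ avoid the two tetrahedra meeting $\mfc_1$, and since the solid torus is attached on the $\mfc_1$ side these curves miss all of its tetrahedra as well. Hence the $\mathfrak m$- and $\mathfrak l$-rows of $\NZ^\flat$ vanish in the $\Delta_j$ columns, so $\mu_j=\mu_j'=\lambda_j=\lambda_j'=0$ and both monomials $\ell^{-\mu_j/2}m^{\lambda_j/2}$, $\ell^{-\mu_j'/2}m^{\lambda_j'/2}$ in~\refeqn{Ptolemy} equal $1$. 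The equation collapses to $\pm\gamma_{j(01)}\gamma_{j(23)}\pm\gamma_{j(02)}\gamma_{j(13)}-\gamma_{j(03)}\gamma_{j(12)}=0$, and the task reduces to matching the six edges of $\Delta_j$ with Farey slopes and fixing the two remaining signs.

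Next I would track the edge identifications of a layering tetrahedron. The $k$th diagonal exchange deletes the old slope $o_k$, introduces the heading slope $h_k$, and keeps the two slopes lying on the shared Farey edge; in the tetrahedron realizing this move the old and new diagonals form one opposite pair of edges, while each retained slope occurs as an opposite pair. Using the vertex convention fixed at the start of the section---viewed from vertex $0$, the vertices $1,2,3$ are anticlockwise---I would verify that the $a$-edges $\{01,23\}$ carry $o_k$ and $h_k$, that the $b$-edges $\{02,13\}$ both carry the pivot slope $p_k$, and that the $c$-edges $\{03,12\}$ both carry the fan slope $f_k$. This gives $\gamma_{j(01)}\gamma_{j(23)}=\gamma_{o_k}\gamma_{h_k}$, $\gamma_{j(02)}\gamma_{j(13)}=\gamma_{p_k}^2$ and $\gamma_{j(03)}\gamma_{j(12)}=\gamma_{f_k}^2$, producing the stated equation up to the two signs; the initial step $k=0$ is identical, with the left and right slopes $f_0,p_0$ of the shared edge in the roles of fan and pivot.

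To pin down the signs I would compute the parities of $B_j$ and $B_j'$ for the layered tetrahedra. Here I would use the explicit form of the Neumann--Zagier data of a layered solid torus from~\cite[Prop.~3.11]{HowieMathewsPurcell} together with the existence of $B$ solving $\NZ\cdot B=C$ from~\cite[Lemma~3.5]{HowieMathewsPurcell}: following the Farey path, the $B$-entries in the filling region may be taken even, whence $(-1)^{B_j}=(-1)^{B_j'}=+1$ and the equation becomes $\gamma_{o_k}\gamma_{h_k}+\gamma_{p_k}^2-\gamma_{f_k}^2=0$. I expect this sign bookkeeping to be the main obstacle, as it is the only place where the purely local layering combinatorics (including the distinction between left and right turns) must be reconciled with the global solution $B$ of $\NZ\cdot B=C$; by contrast the reduction of the monomials and the edge matching are direct substitutions into~\refeqn{Ptolemy}.

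Finally, at $k=N$ no tetrahedron is layered and so there is no Ptolemy equation; instead the two boundary triangles are folded across the diagonal of slope $r$. I would show that this fold glues the pivot edge to the fan edge---the two slopes meeting along the folded diagonal are identified with one another---so that they represent a single edge class and their variables are forced to agree. This yields the folding equation $\gamma_{p_N}=\gamma_{f_N}$, completing the list of equations.
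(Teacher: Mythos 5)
This statement is quoted verbatim from \cite{HowieMathewsPurcell} (their Theorem~3.17(ii)) and the present paper supplies no proof of it, so there is no in-paper argument to compare against; your proposal has to be judged against the proof in the cited reference, and in outline it follows the same route: specialise the general Ptolemy equation \refeqn{Ptolemy} to a layered tetrahedron, kill the $\ell,m$ monomials, match the three opposite-edge pairs to the Farey slopes, and treat the fold as an edge identification forcing $\gamma_{p_N}=\gamma_{f_N}$. The reduction of the monomials and the folding step are essentially right (one small correction: the layered tetrahedra do meet the surviving cusp --- they contribute the shaded triangles inside the hexagon in the cusp triangulation --- so the reason $\mu_j=\mu_j'=\lambda_j=\lambda_j'=0$ is not that the curves live ``on the other side'' of the filling, but that $\mathfrak m_0,\mathfrak l_0$ were chosen to avoid the hexagon, inside which all the new cusp triangles lie).

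The genuine gap is the step you yourself flag as the main obstacle, and it is not a detail that can be waved through. The asymmetry between $+\gamma_{p_k}^2$ and $-\gamma_{f_k}^2$ in the target equation is created entirely by two interdependent choices that you never pin down: (i) the vertex labelling of each layered tetrahedron, which decides whether the pivot pair or the fan pair sits on the $b$-edges $\{02,13\}$ versus the $c$-edges $\{03,12\}$ (relabelling swaps which of the two squared terms carries the minus sign, i.e.\ it exchanges the roles of pivot and fan); and (ii) the parities of $B_j,B_j'$, which you assert ``may be taken even'' without exhibiting the $C$-entries of the layered-solid-torus edge rows or a solution of $\NZ\cdot B=C$ supported appropriately. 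Both choices turn on whether the $k$th step is a left or a right turn, and the whole point of the pivot/fan terminology is that after the correct labelling the equation is uniform in $k$; asserting the conclusion of that bookkeeping is asserting the theorem. As written, your argument establishes only that the Ptolemy equation of a layered tetrahedron has the form $\pm\gamma_{o_k}\gamma_{h_k}\pm\gamma_{p_k}^2-\gamma_{f_k}^2=0$ up to an unresolved permutation of $\{p_k,f_k\}$, which is strictly weaker than the statement. To close the gap you would need to reproduce the explicit computation of the layered-solid-torus Neumann--Zagier data and $C$-vector from \cite[Prop.~3.11]{HowieMathewsPurcell} and verify the sign claim case by case for L and R turns.
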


\reflem{Gamma4/1Gamma1/0} and \refthm{HMPFilling2} then immediately yield the following corollary, which gives A-polynomials of Dehn fillings explicitly, as described in the introduction.
\begin{corollary}\label{Cor:DehnFillWhSis}
Let $X$ denote the $p/q$ Dehn filling of the Whitehead sister; $p/q\notin\{2, 3, 7/2, 11/3, 4, 5, 1/0 \}$. Let the corresponding walk in the Farey triangulation, from the triangle $3,4,1/0$ to $p/q$, have length $N$. Then with notation as above, 
(a factor of) the $\PSL(2,\CC)$ A-polynomial of $X$ is given by the following equations.

\noindent  Outside equations:
\[\gamma_{4/1} = \frac{-\ell+m}{\sqrt{\ell}(-1+m)}, \quad
 \gamma_{1/0} = \frac{-\ell+m^2}{\sqrt{m}(-\ell+m)}, \quad \gamma_{3/1}=1
\]
Recursive equations:
\[ \gamma_{h_k} = \frac{\gamma_{f_k}^2-\gamma_{p_k}^2}{\gamma_{o_k}}, \mbox{ for } k=0, \dots N-1, \]
Folding equation: $\gamma_{p_N}=\gamma_{f_N}$. 
\qed
\end{corollary}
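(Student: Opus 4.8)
The plan is to assemble the statement from the three ingredients established immediately above, since the corollary is essentially a bookkeeping combination of them rather than a fresh argument. First I would observe that, by the Dehn filling construction of \refsec{triangulation_of_Whitehead_sister} and the layered solid torus discussion, the $p/q$ filling $M$ is triangulated by the two tetrahedra $\Delta_0, \Delta_1$ lying outside the layered solid torus together with the $N$ tetrahedra of the layered solid torus determined by the length-$N$ Farey walk from the triangle $3,4,1/0$ to $p/q$. Filling the cusp $\mfc_1$ leaves the single cusp $\mfc_0$, so $M$ is one-cusped; for $p/q$ outside the excluded set this filling is nondegenerate and hyperbolic, hence the resulting triangulation $\TT$ is a hyperbolic triangulation and \refthm{HMP} applies: setting the $\gamma$ variable of a deleted edge row to $1$ and eliminating the remaining $\gamma$ variables while solving the Ptolemy equations for $\ell$ and $m$ yields a factor of the $\PSL(2,\CC)$ A-polynomial, the same factor Champanerkar obtains.

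Next I would split the Ptolemy equations by tetrahedron. Choosing $E_{3/1}$ as the deleted edge row sets $\gamma_{3/1}=1$, which supplies the third outside equation directly. The equations for $\Delta_0$ and $\Delta_1$ lie outside the layered solid torus, so by \refthm{HMPFilling} they coincide with those of the unfilled manifold; these are exactly the pair in \reflem{Tetr01}, and solving them with $\gamma_{3/1}=1$ produces the outside equations for $\gamma_{4/1}$ and $\gamma_{1/0}$ recorded in \reflem{Gamma4/1Gamma1/0}. For the $N$ layered tetrahedra, \refthm{HMPFilling2} gives $\gamma_{o_k}\gamma_{h_k}+\gamma_{p_k}^2-\gamma_{f_k}^2=0$ for $0\le k\le N-1$; since $\gamma_{o_k}\ne 0$ on the relevant component I would rearrange each to $\gamma_{h_k}=(\gamma_{f_k}^2-\gamma_{p_k}^2)/\gamma_{o_k}$, which are precisely the recursive equations, and the $k=N$ case of that theorem is the folding equation $\gamma_{p_N}=\gamma_{f_N}$.

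Finally I would confirm completeness of the listed equations: there is one Ptolemy equation per tetrahedron and one $\gamma$ per edge class, the edge classes being $E_{3/1}, E_{4/1}, E_{1/0}$ together with the slopes introduced along the Farey walk. The recursive equations then express every layered $\gamma_{h_k}$ in terms of earlier variables, hence in terms of $\ell$ and $m$ via the outside equations, so substituting into the folding equation leaves a single relation between $\ell$ and $m$, exactly as \refthm{HMP} demands.

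The computational content here is slight; the real work is verifying hypotheses rather than evaluating anything. The main obstacle is justifying the excluded slope set $\{2,3,7/2,11/3,4,5,1/0\}$: because \refthm{HMP} requires a genuinely hyperbolic triangulation, I must rule out the non-hyperbolic fillings $\{2,3,7/2,11/3,4,1/0\}$ (in the framing of this paper, reconciled with Martelli--Petronio's classification) and the degenerate layered solid tori at $\{2,7/2,5\}$, and check that the Farey walk together with its slope labels $o_k, h_k, f_k, p_k$ is well defined for every remaining slope. A secondary point is confirming that $E_{3/1}$ is a legitimate row to delete, namely that the remaining edge rows of $\NZ^\flat$ stay independent and that the nonvanishing condition on the corresponding entry of $C^\flat$ from \cite[Lem.~2.51]{HowieMathewsPurcell} holds.
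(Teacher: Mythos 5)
Your proposal is correct and follows the same route as the paper, which derives the corollary immediately from \reflem{Gamma4/1Gamma1/0} (for the outside equations, with $\gamma_{3/1}=1$) and \refthm{HMPFilling2} (rearranged to give the recursive equations and the folding equation). The extra care you take over the excluded slopes and the choice of deleted edge row is consistent with the discussion the paper places in the introduction and in the section on exceptional manifolds rather than in the proof itself.
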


Note that the $\gamma$ variables in \refcor{DehnFillWhSis} arise as exponentials of other variables in \cite{HowieMathewsPurcell} (that arise from symplectic linear algebra). As exponentials, they will never be zero, allowing us to write the recursive equations with $\gamma_{o_k}$ in the denominator.

We are particularly interested in $1/n$-Dehn fillings. As seen in \reffig{FareyWhSis}, the first three tetrahedra in the layered solid torus are the same for all $n$, and \refthm{HMPFilling2} immediately yields the following.

\begin{lemma}\label{Lem:First3LSTTet}
Let $K(n)$ be the knot obtained by the $1/n$-Dehn filling of the Whitehead sister.
The Ptolemy equations for the first three tetrahedra in the layered solid torus are as follows. 
\[
\pushQED{\qed}
\gamma_{2/1} = (\gamma_{1/0}^2-\gamma_{3/1}^2)/\gamma_{4/1}, \quad
\gamma_{1/1} = (\gamma_{2/1}^2 - \gamma_{1/0}^2)/\gamma_{3/1}, \quad 
\gamma_{0/1} = (\gamma_{1/1}^2-\gamma_{1/0}^2)/\gamma_{2/1}.  \qedhere
\popQED
\]
\end{lemma}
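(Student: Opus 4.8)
The plan is to invoke \refthm{HMPFilling2}, which already supplies the Ptolemy equation $\gamma_{o_k}\gamma_{h_k}+\gamma_{p_k}^2-\gamma_{f_k}^2=0$ for each tetrahedron of a layered solid torus. Thus there is no new geometric or analytic content to establish; the entire task reduces to identifying, for the first three steps $k=0,1,2$ of the walk in the Farey triangulation, the four slopes playing the roles of old ($o_k$), heading ($h_k$), fan ($f_k$), and pivot ($p_k$) as in \reffig{SlopeLabels}. Once these are read off, each asserted equation is simply the rearrangement $\gamma_{h_k}=(\gamma_{f_k}^2-\gamma_{p_k}^2)/\gamma_{o_k}$.

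First I would fix the initial data. The walk begins at the Farey triangle with vertices $3/1,4/1,1/0$ (the three boundary slopes left after removing $\Delta_2,\Delta_3$), and for the $1/n$-fillings \reffig{FareyWhSis} shows it crosses towards $2/1$ and then turns L, L. I would verify each crossing against the Farey adjacency criterion $|ad-bc|=1$: at step $0$ the heading slope is $h_0=2/1$, which is adjacent to both $3/1$ and $1/0$ but not to $4/1$, so the edge crossed is $\{3/1,1/0\}$ and the discarded old slope is $o_0=4/1$; at step $1$ we discard $o_1=3/1$ and add $h_1=1/1$ across the edge $\{2/1,1/0\}$; at step $2$ we discard $o_2=2/1$ and add $h_2=0/1$ across the edge $\{1/1,1/0\}$. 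Next I would assign fan and pivot at each step. For $k\geq 1$ the pivot is forced: it is the slope common to the edges crossed at consecutive steps, here $1/0$ at both step $1$ and step $2$, so $p_1=p_2=1/0$ while the fans are $f_1=2/1$ and $f_2=1/1$. At the initial step the two slopes $3/1,1/0$ on the shared edge are distinguished only by the left/right convention of \reffig{SlopeLabels}; matching that orientation gives $f_0=1/0$ and $p_0=3/1$. Substituting each resulting quadruple into the recursion then reproduces the three displayed equations verbatim.

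The main obstacle is purely bookkeeping rather than mathematical: I must make sure the left/right, hence fan/pivot, labelling at the initial step $k=0$ is read from \reffig{SlopeLabels} and \reffig{FareyWhSis} with a consistent orientation, since interchanging $f_0$ and $p_0$ would swap the two slopes appearing in the first equation $\gamma_{2/1}=(\gamma_{1/0}^2-\gamma_{3/1}^2)/\gamma_{4/1}$. Once the orientation is pinned down, everything else is a direct transcription from \refthm{HMPFilling2}, and the lemma follows immediately.
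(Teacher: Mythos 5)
Your proposal is correct and follows exactly the route the paper intends: the lemma is stated with an immediate \qed because it is a direct transcription of \refthm{HMPFilling2} along the first three steps of the Farey walk shown in \reffig{FareyWhSis}, and your identification of $(o_k,h_k,p_k,f_k)$ for $k=0,1,2$ (including $p_1=p_2=1/0$ and the resolution $f_0=1/0$, $p_0=3/1$) matches the displayed equations and the paper's other instances of these Ptolemy equations. No gaps.
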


We can now prove the main result, giving equations for $1/n$ fillings with $n \geq 3$.

\begin{proof}[Proof of \refthm{Main}]
\reflem{Gamma4/1Gamma1/0} gives the  Ptolemy equations for the tetrahedra outside the layered solid torus, and \reflem{First3LSTTet} applies similarly to the first three tetrahedra in the layered solid torus. These tetrahedra correspond to the path in the Farey graph corresponding to $1/n$-Dehn filling, up to the triangle $(1/0,1/1,0/1)$. Moving then to the triangle $(1/2,1/1,0/1)$ yields the equation $\gamma_{1/0}\gamma_{1/2}+\gamma_{1/1}^2-\gamma_{0/1}^2=0$. Moving to the triangle $(0/1, 1/n,1/(n-1))$ via triangles $(0/1, 1/(k-1), 1/(k-2))$ with pivot slope always $0/1$, slope $h_n$ corresponding to $1/k$, old slope $1/(k-2)$, and fan slope $1/(k-1)$, as in \reffig{FareyWhSis}, then gives the recursive inside equations as per \refthm{HMPFilling2}. 
\end{proof}

Now consider $1/n$-Dehn filling for a negative integer $n$. The first three steps in the Farey graph are still the same as the positive case, but at that point the walk in the Farey graph diverges. We obtain the following.

\begin{theorem}\label{Thm:RecursiveLSTNegative}
Let $K(n)$ denote the knot obtained by the $1/n$-Dehn filling of the Whitehead sister. 
For $n\geq 2$, the equations defining the A-polynomial of $K(-n)$ consist of the equations of \reflem{Gamma4/1Gamma1/0}, of \reflem{First3LSTTet}, the equation
\[
\gamma_{1/1}\gamma_{-1/1}+\gamma_{1/0}^2-\gamma_{0/1}^2 =0,
\]
the folding equation $\gamma_{0/1} = \gamma_{-1/(n-1)}$, and for $n\geq 3$, recursive formulae
\[
\gamma_{-1/(k-1)}\gamma_{-1/(k-3)} + \gamma_{0/1}^2 - \gamma_{-1/(k-2)}^2=0, \mbox{ for } 3\leq k \leq n.
\]
As each $\gamma_{p/q}$ can be written in terms of $\ell$ and $m$, or $L$ and $M$, substitution gives (a factor of) the $\PSL(2,\CC)$ or  $\SL(2,\CC)$ A-polynomial.
\end{theorem}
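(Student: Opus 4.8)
The plan is to mirror the proof of \refthm{Main}, tracing the walk in the Farey triangulation associated to $-1/n$-Dehn filling and reading off each Ptolemy equation, together with the folding equation, from \refthm{HMPFilling2}. Since tetrahedra $0$ and $1$ lie outside the layered solid torus for every filling, \reflem{Gamma4/1Gamma1/0} supplies the outside equations unchanged. Likewise, the negative walk begins exactly as in the positive case---starting at $(3/1,4/1,1/0)$, moving toward $2/1$, then stepping $L,L$ to reach the triangle $(1/0,1/1,0/1)$---so \reflem{First3LSTTet} applies verbatim and determines $\gamma_{2/1}$, $\gamma_{1/1}$, $\gamma_{0/1}$ in terms of the outside variables.

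From the triangle $(1/0,1/1,0/1)$ the walk now diverges from \refthm{Main}: instead of turning $R$, it takes one further $L$ step, continuing to fan around the pivot $1/0$ and crossing the edge $\{0/1,1/0\}$ to introduce the slope $-1/1$. Here the old, heading, pivot, and fan slopes are $1/1$, $-1/1$, $1/0$, $0/1$ respectively, so \refthm{HMPFilling2} yields exactly $\gamma_{1/1}\gamma_{-1/1}+\gamma_{1/0}^2-\gamma_{0/1}^2=0$. For $n\geq 3$ the remaining steps are a sequence of $R$ turns pivoting about $0/1$: at the relevant step the pivot is $0/1$, the fan slope is $-1/(k-2)$, the heading slope is $-1/(k-1)$, and the old slope is $-1/(k-3)$, using the convention $-1/0=1/0$ when $k=3$. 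Substituting these labels into $\gamma_{o_k}\gamma_{h_k}+\gamma_{p_k}^2-\gamma_{f_k}^2=0$ gives the stated recursion for $3\leq k\leq n$ (the range being empty when $n=2$).

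Finally, the walk terminates one triangle short of $-1/n$, which is the mediant of $0/1$ and $-1/(n-1)$; folding across this diagonal makes the slope $-1/n$ homotopically trivial, and the folding clause of \refthm{HMPFilling2}, namely $\gamma_{p_N}=\gamma_{f_N}$ with pivot $0/1$ and fan $-1/(n-1)$, produces $\gamma_{0/1}=\gamma_{-1/(n-1)}$. Each recursive equation expresses one new $\gamma$ in terms of previously determined ones, and the outside equations express everything in terms of $\ell,m$; hence back-substitution writes every $\gamma_{p/q}$ as a function of $\ell,m$ (or of $L,M$ after $m=M^2$, $\ell=L^2$). Feeding these into the folding equation and clearing denominators then yields, by \refthm{HMP}, the claimed factor of the $\PSL(2,\CC)$ (respectively $\SL(2,\CC)$) A-polynomial.

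The main obstacle I expect is the bookkeeping of the $L/R$ turn directions and the pivot-versus-fan assignment along the negative walk, since these govern the placement of the squared terms and the signs in each Ptolemy equation. In particular one must confirm that the lone $L$ step still pivots around $1/0$ (so that the fan slope is $0/1$, producing the displayed $\gamma_{1/1}\gamma_{-1/1}$ equation) and that the subsequent $R$ steps switch the pivot to $0/1$, and one must correctly handle the degenerate identification $-1/0=1/0$ that appears in the first recursive equation.
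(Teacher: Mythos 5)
Your proposal is correct and follows essentially the same route as the paper: trace the Farey walk for $-1/n$, reuse the outside equations and the first three layered-solid-torus equations, read off the extra $L$ step pivoting at $1/0$ and the subsequent $R$ steps pivoting at $0/1$ via \refthm{HMPFilling2}, and finish with the folding equation. Your version is just more explicit than the paper's brief sketch about the old/heading/pivot/fan assignments and the $-1/0=1/0$ convention in the first recursive equation.
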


\begin{proof}
Similarly to \refthm{Main}, after initial steps to $(1/0,1/1,0/1)$, the Farey path moves to $(1/0,-1/1, 0/1)$, picking up $\gamma_{1/1}\gamma_{-1/1}+\gamma_{1/0}^2-\gamma_{0/1}^2 =0$. It then moves to $(0/1, -1/n,-1/(n-1))$ by way of triangles $(0/1, -1/(k-1), -1/(k-2))$ with pivot slope
$0/1$, heading slope $-1/k$, old slope $-1/(k-2)$, and fan slope $-1/(k-1)$.
\end{proof}

\subsection{Changing basis}
For our A-polynomial to agree with other computations, we adjust the generators for cusp homology, which were chosen
to have a small number of nonzero entries in the Neumann--Zagier matrix and avoid the hexagon corresponding to the layered solid torus. We now convert to the usual meridian and preferred longitude.

\begin{proposition}\label{Prop:CoB1}
Let $\mathfrak{l}$ and $\mathfrak{m}$ be generators of the cusp homology. A change of basis described by $(\mathfrak{l},\mathfrak{m})\mapsto(\mathfrak{l}^a \mathfrak{m}^b, \mathfrak{l}^c \mathfrak{m}^d)$ corresponds to a change of basis in the variables $\ell,m$ described by $(\ell,m)\mapsto(\ell^d m^{-b}, \ell^{-c} m^a)$. Moreover, after making the substitutions $\ell=L^2$ and $m=M^2$, the change of basis corresponds to $(L,M)\mapsto(L^d M^{-b}, L^{-c} M^a)$.
\end{proposition}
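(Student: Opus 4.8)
The plan is to exploit the fact that the cusp holonomy eigenvalue defines a homomorphism from the peripheral group $H_1(\partial)\cong\Z^2$ to $\C^\ast$. Recall from the setup of \refthm{HMP} that the variable $m$ is the eigenvalue of the holonomy of $\mathfrak{m}$ and $\ell$ the eigenvalue of the holonomy of $\mathfrak{l}$. Since the peripheral subgroup is abelian, its image in $\PSL(2,\CC)$ is simultaneously triangularizable, so the eigenvalue assignment $\gamma\mapsto(\text{eigenvalue of }\gamma)$ is multiplicative: the curve $\mathfrak{l}^a\mathfrak{m}^b$ has eigenvalue $\ell^a m^b$. This multiplicativity is the one substantive input; everything else is formal.

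First I would apply this to the new generators. Writing $\mathfrak{l}'=\mathfrak{l}^a\mathfrak{m}^b$ and $\mathfrak{m}'=\mathfrak{l}^c\mathfrak{m}^d$, the new variables are $\ell'=\ell^a m^b$ and $m'=\ell^c m^d$, which in logarithmic (additive) notation is the linear relation with matrix $\left(\begin{smallmatrix}a&b\\ c&d\end{smallmatrix}\right)$ sending $(\log\ell,\log m)$ to $(\log\ell',\log m')$. Next I would invert this relation. A change of basis of $H_1(\partial)$ respecting the intersection form has determinant $ad-bc=1$; this is implicit in the statement, and indeed holds in the application $\mathfrak{l}'=\mathfrak{l}\mathfrak{m}^{-8}$, $\mathfrak{m}'=\mathfrak{m}$. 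The inverse matrix is then $\left(\begin{smallmatrix}d&-b\\ -c&a\end{smallmatrix}\right)$, giving $\ell=(\ell')^d(m')^{-b}$ and $m=(\ell')^{-c}(m')^a$. Dropping primes, the substitution is exactly $(\ell,m)\mapsto(\ell^d m^{-b},\ell^{-c}m^a)$.

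For the $\SL(2,\CC)$ assertion I would substitute $\ell=L^2$ and $m=M^2$: the rule $\ell\mapsto\ell^d m^{-b}$ becomes $L^2\mapsto L^{2d}M^{-2b}=(L^dM^{-b})^2$, and similarly for $m$, so $(L,M)\mapsto(L^dM^{-b},L^{-c}M^a)$, the square-root sign ambiguity being irrelevant for the A-polynomial.

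The step to be careful about—what I would call the main obstacle—is bookkeeping the direction of the change of variables. To re-express a polynomial originally written in $\ell,m$ in terms of the new generators, one must substitute the \emph{old} variables as functions of the \emph{new} ones, which is why the transformation on variables is the inverse (contragredient) matrix rather than $\left(\begin{smallmatrix}a&b\\ c&d\end{smallmatrix}\right)$ itself. Once the homomorphism property of the eigenvalue map and the determinant-one normalization are in hand, matching the inverse-matrix exponents against the stated formula completes the proof.
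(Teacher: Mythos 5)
Your proposal is correct, but it takes a genuinely different route from the paper's proof. The paper never mentions holonomy: it argues entirely inside the Neumann--Zagier/Ptolemy formalism. There, the change of basis replaces the cusp rows $\lambda,\mu$ of $\NZ$ by $a\lambda+b\mu$ and $c\lambda+d\mu$, the same vector $B$ still solves $\NZ\cdot B=C$ (so the signs $(-1)^{B_i},(-1)^{B_i'}$ in the Ptolemy equations are untouched), and then the purely formal identity $\ell^{-(c\lambda_i+d\mu_i)/2}m^{(a\lambda_i+b\mu_i)/2}=(\ell^{d}m^{-b})^{-\mu_i/2}(\ell^{-c}m^{a})^{\lambda_i/2}$ exhibits each new Ptolemy coefficient as the old one under the stated substitution; nothing is inverted and no determinant hypothesis is needed. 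Your argument instead works downstream, on the character variety: you use multiplicativity of the peripheral eigenvalue (derivative-of-holonomy) map and then invert the exponent matrix, which is exactly where $ad-bc=1$ enters. That hypothesis is legitimate here --- in this framework both bases are required to have algebraic intersection number $+1$, so the change of basis lies in $\SL(2,\Z)$ --- and you correctly flag the one real pitfall, namely that the substitution on variables is the inverse (contragredient) matrix rather than the matrix itself. What your route buys is conceptual clarity and generality: it explains \emph{why} the inverse matrix appears, and it applies to any formulation in which $\ell,m$ are eigenvalue coordinates. What it costs is that it proves the transformation law for the A-polynomial rather than for the Ptolemy equations themselves: it presupposes the identification of $\ell,m$ with holonomy eigenvalues (i.e., \refthm{HMP}), and it bypasses, rather than verifies, the sign bookkeeping via the $B$-vector that the paper's proof checks explicitly.
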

\begin{proof}
Suppose the rows of the $\NZ$ matrix corresponding to $\mathfrak{l}$ and $\mathfrak{m}$ are 
\[ \left[ \lambda_0 \quad \lambda_0' \quad \dots \quad \lambda_{n-1} \quad \lambda_{n-1}' \right]  \text{ and } \left[ \mu_0 \quad \mu_0' \quad \dots \quad \mu_{n-1} \quad \mu_{n-1}' \right]. \]
Then after the change of basis the rows of the $\NZ$ matrix becomes 
\begin{align*}
  &\left[ a\cdot\lambda_0+b\cdot\mu_0 \quad a\cdot\lambda_0'+b\cdot\mu_0' \quad \dots \quad a\cdot\lambda_{n-1}+b\cdot\mu_{n-1} \quad a\cdot\lambda_{n-1}'+b\cdot\mu_{n-1}' \right] \text{ and } \\
    &\left[ c\cdot\lambda_0+d\cdot\mu_0 \quad c\cdot\lambda_0'+d\cdot\mu_0' \quad \dots \quad c\cdot\lambda_{n-1}+d\cdot\mu_{n-1} \quad c\cdot\lambda_{n-1}'+d\cdot\mu_{n-1}' \right]. 
\end{align*}
The $C$ vector also changes accordingly, and the same $B$ vector satisfies the new equation $\NZ\cdot B=C$. As such, the coefficient of $\gamma_{i(01)}\gamma_{i(23)}$  becomes
\begin{align*}
  (-1)^{B_i'}\ell^{-(c\lambda_i+d\mu_i)/2}m^{(a\lambda_i+b\mu_i)/2}\gamma 
  &= (-1)^{B_i'}(\ell^{d}m^{-b})^{-\mu_i/2} (\ell^{-c}m^a)^{\lambda_i/2}.
\end{align*}
Similar reasoning shows that the coefficient of $\gamma_{i(02)}\gamma_{i(13)}$ becomes
\[ (-1)^{B_i}(\ell^{d}m^{-b})^{-\mu_i'/2}(\ell^{-c}m^a)^{\lambda_i'/2}. \]
Thus, the change of basis ${(\mathfrak{l},\mathfrak{m})\mapsto(\mathfrak{l}^a \mathfrak{m}^b, \mathfrak{l}^c \mathfrak{m}^d)}$ 
corresponds to ${(\ell,m)\mapsto(\ell^d m^{-b}, \ell^{-c} m^a)}$ in the A-polynomial variables. A similar argument holds for $L$, $M$.
\end{proof}

\begin{proposition}\label{Prop:CoB2}
Let $K$ be a link in $S^3$ with components $K_1$, $K_2$, where $K_2$ is unknotted. 
Let $\mathfrak{l}$ and $\mathfrak{m}$ be generators of the cusp homology corresponding to $K_1$ and let $\mathfrak{l}'=\mathfrak{l}^a\mathfrak{m}^b$ and $\mathfrak{m}'=\mathfrak{l}^c\mathfrak{m}^d$ be the actual meridian and preferred longitude. Let $x$ be the linking number of $K_1$ and $K_2$. The change of basis required for a $1/n$-Dehn filling is
  \[ (\mathfrak{l},\mathfrak{m})\mapsto (\mathfrak{l}^{a}\mathfrak{m}^{b+nx^2}, \mathfrak{l}^{c}\mathfrak{m}^{d}). \]
\end{proposition}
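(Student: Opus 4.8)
The plan is to track the meridian and the preferred longitude of $K_1$ separately under the filling. Since $K_2$ is unknotted, $1/n$-surgery on it returns $S^3$, so the image of $K_1$ is again a knot in $S^3$ and acquires a well-defined new meridian and preferred longitude; the proposition is just the statement of how these two curves sit in the fixed computational basis $\mathfrak{l},\mathfrak{m}$ of the cusp torus. I would argue that the meridian is unaffected, so that all the content lies in a framing correction to the longitude, which I would extract from a linking-number (Rolfsen twist) computation.

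First I would observe that the meridian $\mathfrak{m}'=\mathfrak{l}^c\mathfrak{m}^d$ of $K_1$ is intrinsic to a small tubular neighbourhood of $K_1$: it bounds a disk meeting $K_1$ once, and this disk can be chosen disjoint from $K_2$ and from the surgery solid torus. Hence $\mathfrak{m}'$ is literally the same curve on the cusp torus before and after filling, so in the fixed basis it remains $\mathfrak{l}^c\mathfrak{m}^d$. This yields the second coordinate of the claimed map with no change, and reduces the problem to the longitude.

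For the longitude, I would use that $1/n$-surgery on the unknotted $K_2$ is realised by $-n$ full twists on the strands passing through a disk $D$ bounded by $K_2$ (a Rolfsen twist). The key consequence is the linking-number change formula: for curves $\alpha,\beta$ disjoint from $K_2$,
\[
\operatorname{lk}_{\mathrm{new}}(\alpha,\beta)=\operatorname{lk}_{\mathrm{old}}(\alpha,\beta)-n\,\operatorname{lk}(\alpha,K_2)\operatorname{lk}(\beta,K_2).
\]
I would apply this with $\alpha=K_1$ and $\beta=\mathfrak{l}'$, the old preferred longitude. Because $\mathfrak{l}'$ is a pushoff of $K_1$ cobounding an annulus in a neighbourhood of $K_1$ disjoint from $K_2$, we have $\operatorname{lk}(\mathfrak{l}',K_2)=\operatorname{lk}(K_1,K_2)=x$; and since $\mathfrak{l}'$ is the $0$-framing, $\operatorname{lk}_{\mathrm{old}}(K_1,\mathfrak{l}')=0$. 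The formula then gives $\operatorname{lk}_{\mathrm{new}}(K_1,\mathfrak{l}')=-nx^2$.

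Finally I would correct the framing: the new preferred longitude must be null-homologous in the new knot complement, i.e.\ have zero linking with $K_1$, so it is obtained from $\mathfrak{l}'$ by adjoining $nx^2$ meridians, $\mathfrak{l}'\mathfrak{m}^{nx^2}=\mathfrak{l}^a\mathfrak{m}^{b+nx^2}$. Combining with the unchanged meridian produces the stated change of basis $(\mathfrak{l},\mathfrak{m})\mapsto(\mathfrak{l}^a\mathfrak{m}^{b+nx^2},\mathfrak{l}^c\mathfrak{m}^d)$. The main obstacle I anticipate is purely bookkeeping: fixing the orientation and twist conventions so that the exponent comes out $+nx^2$ for $1/n$-filling (rather than the opposite sign, or the value for $-1/n$). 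I would pin this down with the integral case $n=1$, where $+1$-surgery is the blow-down of a $+1$-framed unknot, for which the framing of $K_1$ drops by $x^2$ and hence the longitude correction is exactly $+x^2$, consistent with the formula above.
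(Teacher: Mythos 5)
Your argument is correct: the paper's own ``proof'' of this proposition is simply a citation to Rolfsen (Section 9H, p.~267), and what you have written out --- the meridian persisting unchanged because it bounds a disk disjoint from $K_2$ and the surgery torus, the Rolfsen twist realising $1/n$-surgery on the unknotted component as $-n$ full twists along a spanning disk, the resulting linking-number change $\operatorname{lk}(K_1,\mathfrak{l}')\mapsto -nx^2$, and the consequent correction of the preferred longitude by $nx^2$ meridians --- is precisely the content of that reference, with the sign convention correctly pinned down by your $n=1$ blow-down check. No gap; this is the same approach, just with the details the paper outsources actually supplied.
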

\begin{proof}
See Rolfsen's textbook~\cite{Rolfsen}, Section 9H; in particular page 267.
\end{proof}

\begin{corollary}\label{Cor:WhiteheadSisCOB}
Let $K(n)$ denote the $1/n$-Dehn filling of the Whitehead sister. 
Then for $K(n)$, the required change of basis from the basis of \reffig{WhSisCuspTri} to the standard meridian and longitude in \reffig{preflong} is given by:
\[ (\mathfrak{l},\mathfrak{m})\mapsto (\mathfrak{l}\mathfrak{m}^{-8+25n}, \mathfrak{m}). \]
Consequently, the change of basis in the A-polynomial variables is:
\[ (\ell, m) \mapsto (\ell m ^{8-25n}, m) \quad \mbox{and} \quad
(L,M)\mapsto (LM^{8-25n},M). \]
\end{corollary}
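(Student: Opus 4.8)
The plan is to combine the two preceding propositions with the specific homological data for the Whitehead sister computed in \refsec{triangulation_of_Whitehead_sister}. First I would record the inputs. From the discussion following \reffig{preflong} we have the relation $\mathfrak{l}'=\mathfrak{l}\mathfrak{m}^{-8}$, which tells us that if $\mathfrak{l}'$ and $\mathfrak{m}'$ denote the actual (preferred) longitude and meridian, then in the notation of \refprop{CoB2} we have $\mathfrak{l}'=\mathfrak{l}^a\mathfrak{m}^b$ and $\mathfrak{m}'=\mathfrak{l}^c\mathfrak{m}^d$ with $(a,b)=(1,-8)$ and $(c,d)=(0,1)$; indeed the text states that the generator $\mathfrak{m}$ we chose \emph{is} already the meridian, forcing $c=0,d=1$.

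Next I would identify the linking number $x$ of the two components of the Whitehead sister link. Since one component is unknotted and we are filling it, $x$ is the linking number of the knotted component $K_1$ with the unknotted component $K_2$. For the $(-2,3,8)$-pretzel link in the framing used here, I would read off $x=5$ (consistent with the appearance of the factor $5$ in the twisted torus knot $T(5,1-5n,2,2)$ and with SnapPy/Regina data), so that $x^2=25$. Substituting $(a,b)=(1,-8)$ and $x^2=25$ into the change-of-basis formula of \refprop{CoB2} gives
\[
(\mathfrak{l},\mathfrak{m})\mapsto(\mathfrak{l}^{a}\mathfrak{m}^{b+nx^2},\mathfrak{l}^{c}\mathfrak{m}^{d})=(\mathfrak{l}\mathfrak{m}^{-8+25n},\mathfrak{m}),
\]
which is the first displayed change of basis.

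For the second assertion I would feed this homological change of basis into \refprop{CoB1}. That proposition says a basis change $(\mathfrak{l},\mathfrak{m})\mapsto(\mathfrak{l}^a\mathfrak{m}^b,\mathfrak{l}^c\mathfrak{m}^d)$ induces $(\ell,m)\mapsto(\ell^d m^{-b},\ell^{-c}m^a)$. Here the exponents are $a=1$, $b=-8+25n$, $c=0$, $d=1$, so the induced map on A-polynomial variables is $(\ell,m)\mapsto(\ell^{1}m^{-(-8+25n)},\ell^{0}m^{1})=(\ell m^{8-25n},m)$, and the identical formula with $L,M$ replacing $\ell,m$ follows from the last sentence of \refprop{CoB1}. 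This yields exactly the stated $(\ell,m)\mapsto(\ell m^{8-25n},m)$ and $(L,M)\mapsto(LM^{8-25n},M)$.

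The only genuinely non-formal step is pinning down the linking number $x=5$ in our chosen framing; once $x^2=25$ is in hand, the corollary is a direct substitution into \refprop{CoB2} followed by \refprop{CoB1}, with no further computation. I would therefore spend the proof verifying that the framing of \reffig{WhSisCuspTri}, together with the identification $\mathfrak{l}'=\mathfrak{l}\mathfrak{m}^{-8}$, is consistent with $x=5$ (for instance by checking that the resulting slopes match the exceptional-filling data and the twisted torus description $T(5,1-5n,2,2)$), and then simply assemble the two propositions.
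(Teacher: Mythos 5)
Your proposal is correct and follows essentially the same route as the paper's own (very terse) proof: identify the linking number $x=5$, substitute into \refprop{CoB2} with $(a,b,c,d)=(1,-8,0,1)$ coming from $\mathfrak{l}'=\mathfrak{l}\mathfrak{m}^{-8}$ and $\mathfrak{m}'=\mathfrak{m}$, then apply \refprop{CoB1}. You simply make explicit the exponent bookkeeping that the paper leaves to the reader.
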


\begin{proof}
The linking number of the two components is 5, so applying Proposition~\ref{Prop:CoB2}, the required change of basis is as claimed. 
By \refprop{CoB1}, the required change of basis in the A-polynomial variables is also as claimed.
\end{proof}

%%%%%%%%%%%%%%%%%%%%%%%%%%%%%%%%%%%%%%%%%%%%%%%%%%%%%%%%%%%%%%%%%
\section{Appendix: A-polynomial calculations}

In this section, we include calculations of some of the simplest A-polynomials arising from our Dehn fillings.

\subsection{The knot $K3_1$}
Recall that the knot $K3_1$ is obtained by $1/1$-Dehn filling the Whitehead sister. 
Set $\gamma_{3/1}=1$ and use the equations of \reflem{Gamma4/1Gamma1/0} to obtain equations for $\gamma_{4/1}$, $\gamma_{1/0}$ in terms of $(\ell, m)$ or $(L,M)$. 

The $1/1$-Dehn filling is obtained by attaching only one tetrahedron in the layered solid torus, and then folding; see \reffig{FareyWhSis}. This gives two equations: one Ptolemy equation $\gamma_{4/1}\gamma_{2/1}+\gamma_{3/1}^2-\gamma_{1/0}^2=0$
and the folding equation $\gamma_{2/1}=\gamma_{1/0}$.

In terms of $L$ and $M$, plugging the folding equation into the Ptolemy equation, as well as equations of $\gamma_{4/1}$, $\gamma_{1/0}$, $\gamma_{3/1}=1$, gives
\[ -\frac{\left(L^2-M^4\right)^2}{M^2 \left(L^2-M^2\right)^2}+\frac{L^2-M^4}{M \left(L-L M^2\right)}+1 \]

After applying the change of basis of \refcor{WhiteheadSisCOB}, the largest factor is
\begin{align*}
   L^6-L^5 M^{20}+2 L^5 M^{18}-L^5 M^{16}-L^4 M^{38}-2 L^4 M^{36}+
   2 L^2 M^{74}+L^2 M^{72}  \\ +L M^{94}-2 L M^{92}+L M^{90}-M^{110}.
\end{align*}
This is identical to Culler's A-polynomial for $K3_1$ \cite{Culler}. 

We may instead use the expressions for $\gamma_{4/1}$ and $\gamma_{1/0}$ in terms of $\ell$ and $m$, to obtain the $\PSL(2,\CC)$ A-polynomial:
\begin{align*}
  \ell^3 - \ell^{5/2} m^{10} + 2 \ell^{5/2} m^9 - \ell^{5/2} m^8 - \ell^2 m^{19} -
  2 \ell^2 m^{18} + 2 \ell m^{37} + \ell m^{36}  \\
  + \sqrt{\ell} m^{47} - 2 \sqrt{\ell} m^{46} + \sqrt{\ell} m^{45} - m^{55}
\end{align*}

\subsection{The knot $K5_4$}
The knot $K5_4$ is obtained by $1/2$-Dehn filling the Whitehead sister. This requires attaching two ideal tetrahedra in the layered solid torus, and then folding; see \reffig{FareyWhSis}.
To compute the A-polynomial for $K5_4$, we use all the equations we used for $K3_1$ except for the folding equation, along with two new Ptolemy equations corresponding to steps 1 and 2. The new equations are
\[
  \gamma_{3/1}\gamma_{1/1}+\gamma_{1/0}^2-\gamma_{2/1}^2 =0 \quad \text{and} \quad 
  \gamma_{2/1}\gamma_{0/1}+\gamma_{1/0}^2-\gamma_{1/1}^2 =0. 
\]
The folding equation for $K5_4$ is $\gamma_{0/1}=\gamma_{1/1}$. 

Using the expressions for $\gamma_{4/1}$ and $\gamma_{1/0}$ in terms of $L$ and $M$ (with $\gamma_{3/1}$ set to 1), we find a precursor to the A-polynomial of the $K5_4$ knot of the form
\begin{align*}
  -\frac{1}{M^8 \left(L^2-M^2\right)^{12}} -M^6 \left(L^2-M^4\right)^2 \left(L^2-M^2\right)^{10} \\
  +\left(\left(L^2-M^2\right)^4 \left(M^5-L^2 M \right)^2-\left(M^2-1\right)^4 \left(L^5-L M^6\right)^2\right)^2\\
  +L \left(M-M^3 \right)^2 \left( L^4-M^6 \right) \left(L^2-M^2\right)^3 \\
  \left( \left(L^2-M^2\right)^4 \left(M^5-L^2 M\right)^2
  -\left(M^2-1\right)^4 \left(L^5-L M^6\right)^2\right) 
\end{align*}

After applying the change of basis of \refcor{WhiteheadSisCOB} and clearing negative exponents, the largest factor is a polynomial identical to Culler's A-polynomial for $K5_4$ \cite{Culler}. We omit the polynomial here; it has 106 terms, maximum degree 820 in $M$, 19 in $L$. 

\bibliography{biblio}
\bibliographystyle{amsplain}

\end{document}